\newcommand{\bSigma}{\bm{\Sigma}}
\newcommand{\bOmega}{\bm{\Omega}}
\newcommand{\tTheta}{\tilde{\Theta}}
\newcommand{\rk}[1]{\operatorname{rank}(#1)}
\newcommand{\bA}{\mathbf{A}}
\newcommand{\bB}{\mathbf{B}}
\newcommand{\bC}{\mathbf{C}}
\newcommand{\bQ}{\mathbf{Q}}
\newcommand{\bR}{\mathbf{R}}
\newcommand{\bU}{\mathbf{U}}
\newcommand{\bV}{\mathbf{V}}
\newcommand{\bX}{\mathbf{X}}
\newcommand{\bY}{\mathbf{Y}}
\newcommand{\bZ}{\mathbf{Z}}
\newcommand{\bh}{\mathbf{h}}
\newcommand{\bbE}{\mathbb{E}}
\newcommand{\bbR}{\mathbb{R}}
\newcommand{\calS}{\mathcal{S}}
\newcommand{\sinc}{\text{sinc}}
\newtheorem{theorem}{Theorem}
\newtheorem{remark}{Remark}
\newtheorem{lemma}{Lemma}
\newtheorem{corollary}{Corollary}
\newtheorem{definition}{Definition}
\newtheorem{proposition}{Proposition}
\def\BibTeX{{\rm B\kern-.05em{\sc i\kern-.025em b}\kern-.08em
    T\kern-.1667em\lower.7ex\hbox{E}\kern-.125emX}}
\begin{document}

\title{A Precise Performance Analysis of the Randomized Singular Value Decomposition\\}

\author{\IEEEauthorblockN{Danil Akhtiamov$^*$}
\IEEEauthorblockA{
\textit{Department of }\\
\textit{Computing and Mathematical Sciences} \\
\textit{Caltech, Pasadena, CA} \\
dakhtiam@caltech.edu}
\and
\IEEEauthorblockN{Reza Ghane$^*$}
\IEEEauthorblockA{
\textit{Department of }\\
\textit{ Electrical Engineering} \\
\textit{Caltech, Pasadena, CA}\\
rghanekh@caltech.edu}
\and
\IEEEauthorblockN{Babak Hassibi}
\IEEEauthorblockA{
\textit{Department of }\\
\textit{Electrical Engineering}  \\
\textit{Caltech, Pasadena, CA}\\
hassibi@caltech.edu}}

\maketitle

\def\thefootnote{*}\footnotetext{Equal contribution}

\begin{abstract}
The Randomized Singular Value Decomposition (RSVD) is a widely used algorithm for efficiently computing low-rank approximations of large matrices, without the need to construct a full-blown SVD. Of interest, of course, is the approximation error of RSVD compared to the optimal low-rank approximation error obtained from the SVD. While the literature provides various upper and lower error bounds for RSVD, in this paper we derive precise asymptotic expressions that characterize its approximation error as the matrix dimensions grow to infinity. Our expressions depend only on the singular values of the matrix, and we evaluate them for two important matrix ensembles: those with power law and bilevel singular value distributions. Our results aim to quantify the gap between the existing theoretical bounds and the actual performance of RSVD. Furthermore, we extend our analysis to polynomial-filtered RSVD, deriving performance characterizations that provide insights into optimal filter selection.
\end{abstract}

\section{Introduction}

The task of finding a low-rank approximation of a matrix lies at the cornerstone of many computational problems in data science and signal processing. For example, principal component analysis  \cite{jolliffe2002principal}, latent semantic indexing \cite{berry1995using} and collaborative filtering \cite{koren2009matrix} all fundamentally rely on extracting the dominant singular values and vectors from large-scale matrices. However, as modern datasets grow to unprecedented scales, traditional deterministic algorithms for computing the Singular Value Decomposition (SVD) face prohibitive computational and memory constraints. Randomized Singular Value Decomposition (RSVD) \cite{halko2011finding, tropp2023randomized} has emerged as a powerful alternative, leveraging randomized sampling techniques to compute decent low-rank approximations with dramatically reduced computational complexity. By exploiting the inherent low-rank structure present in many real-world matrices, RSVD achieves approximation errors comparable to deterministic methods, while reducing the time complexity from $O(dn \min(d,n))$ to $O(dnk)$, where $n$ and $d$ are the dimensions of the matrix, and $k$ is the rank of the approximation. In this work, we develop a novel analysis of RSVD performance and its polynomial-filtered generalizations using Gaussian Comparison Inequalities \cite{Gordon1985SomeIF, Gordon88}. We derive exact asymptotic expressions for the approximation error that enable us to quantify the tightness of existing theoretical bounds and provide insights into optimal filter selection. Our precise characterizations not only illuminate the strengths and limitations of current theoretical guarantees but also enable us to improve upon existing upper bounds. By comparing our results with bounds from the literature, we provide a more complete picture of RSVD behavior in practice.

\section{Preliminaries}

\subsection{Randomized SVD}


The RSVD algorithm, presented in \cite{halko2011finding}, relies on the idea of Gaussian sketching and works as follows:


\begin{algorithm}
\SetAlgoLined
\KwIn{Matrix $\bA \in \bbR^{n \times d}$, $k \le \rk{\bA}$}
\KwOut{Approximation $\hat{\bA} \in \bbR^{n \times d}$ with $\rk{\hat{\bA}} \le k$}
\begin{itemize}

\item Sample $\bOmega \in \bbR^{d \times k}$ i.i.d. from $\mathcal{N}(0,1)$. 

\item Define $\bY := \bA\bOmega$.

\item Take QR-decomposition $\bY = \bQ\bR$, where $\bQ \in \bbR^{n \times k}$.


\item Define and output $\hat{\bA} := \bQ(\bQ^T\bA)$.

\end{itemize}

\caption{RSVD}

\label{alg:randomized_approx}
\end{algorithm}

The following generalization of Algorithm \ref{alg:randomized_approx}, that applies a polynomial filter to $\bA$ first, is usually used in practice: 

\begin{algorithm}[h]
\SetAlgoLined
\KwIn{Matrix $\mathbf{A} \in \mathbb{R}^{n \times d}$, $k \le \mathrm{rank}(\mathbf{A})$, polynomial filter $\chi(\mathbf{A}) = c_0+\sum_{j=1}^{p} c_{j} \mathbf{(AA^T)}^{j-1}\bA$}
\KwOut{Approximation $\hat{\mathbf{A}} \in \mathbb{R}^{n \times d}$ with $\mathrm{rank}(\hat{\mathbf{A}}) \le k$}
\begin{itemize}
\item Sample $\boldsymbol{\Omega} \in \mathbb{R}^{d \times k}$ i.i.d. from $\mathcal{N}(0,1)$. 
\item \textbf{Polynomial filtering}: Compute $\mathbf{Y} := \chi(\mathbf{A})\boldsymbol{\Omega}$
\begin{itemize}
    \item Initialize $\mathbf{Y} := c_0\boldsymbol{\Omega}$
    \item Initialize $\mathbf{Z} := \boldsymbol{\Omega}$ \quad \textit{// Stores $(\bA{\bA^T})^{j-1}\bA\boldsymbol{\Omega}$}
    \item For $j=1$, find $\bA\bOmega$
    \item For $j = 2$ to $p$:
    \begin{itemize}
        \item $\bZ' := \bA^T\mathbf{Z}$ 
        \item $\bZ := \bA\bZ'$
        \quad \textit{// Now $\mathbf{Z} = \mathbf{(AA^T)}^{j-1}\bA\boldsymbol{\Omega}$}
        \item $\mathbf{Y} := \mathbf{Y} + c_j\mathbf{Z}$ 
    \end{itemize}
\end{itemize}
\item Take QR-decomposition $\bY = \bQ\bR$, where $\bQ \in \bbR^{n \times k}$
\item Define and output $\hat{\mathbf{A}} := \mathbf{Q}(\mathbf{Q}^T\mathbf{A})$.
\end{itemize}

\caption{RSVD with a polynomial filter}
\label{alg:randomized_approx_filters}
\end{algorithm}


The core intuition behind Algorithm \ref{alg:randomized_approx} is that many important real-world matrices are "approximately low-rank". To approximate a matrix $\bA$, which is "approximately rank $r$", one usually takes $k > r$ in Algorithm \ref{alg:randomized_approx} to build in tolerance for the algorithm's inherent approximation error. The rank $r$ is usually referred to as {\it target rank} in the literature. To avoid confusion, we would like to highlight that the actual rank of the approximation returned by Algorithm \ref{alg:randomized_approx} is $k$ rather than $r$.

As for Algorithm \ref{alg:randomized_approx_filters}, note that $\chi(\bA) = \bA$ recovers Algorithm \ref{alg:randomized_approx}. Further intuition is that the polynomial $\chi$ acts as a spectral filter, amplifying dominant singular values while suppressing smaller ones. The choice of $\chi$ can be tailored to the spectral properties of $\mathbf{A}$, but in this work we mainly concentrate on $\chi(\bA)= (\mathbf{A}\mathbf{A}^T)^q\mathbf{A}$ after deriving the main theorem.

\subsection{Related Works}

As mentioned in the abstract, the main advantage of the present paper is that its results are {\bf tight} in the asymptotic regime. The purpose of this section is to elucidate the significance of our results compared to the existing results in the literature.

The seminal work of \cite{halko2011finding} established approximation error bounds for various randomized low-rank matrix approximations with respect to different matrix norms. More recently, \cite{tropp2023randomized} provided simplified analyses yielding cleaner bounds, that can be stated as follows in the case of the Frobenius norm:

\begin{theorem}[Theorem 8.1 in \cite{tropp2023randomized}]\label{thm: halko_bound}
    
Denote the output of Algorithm \ref{alg:randomized_approx} for input $\bA$ and Gaussian $\bOmega \in \bbR^{d \times k}$ by $\hat{\bA}(\bOmega)$  and consider an arbitrary $r < k - 1$. It holds that:
\begin{align*}
\bbE_{\bOmega \in \bbR^{d \times k}} \|\bA - \hat{\bA}(\bOmega)\|_F^2\leq \left(1 + \frac{r}{k-r-1}\right) \sum_{j>r} \sigma_j^2
\end{align*}

\end{theorem}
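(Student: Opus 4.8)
The plan is to reduce the claim to a single deterministic structural inequality followed by two classical Gaussian computations. I would first write the output as $\hat{\bA} = \bQ\bQ^T\bA = \bP_{\bY}\bA$, the orthogonal projection of the columns of $\bA$ onto $\mathrm{range}(\bY) = \mathrm{range}(\bA\bOmega)$; since $\bQ$ has orthonormal columns spanning this subspace, $\norm{\bA - \hat{\bA}}_F = \min_{\bX \in \bbR^{k\times d}} \norm{\bA - \bA\bOmega\bX}_F$. Then I would fix an SVD $\bA = \bU_1\bSigma_1\bV_1^T + \bU_2\bSigma_2\bV_2^T$, where $\bSigma_1$ collects the $r$ largest singular values and $[\bV_1\ \bV_2]$, $[\bU_1\ \bU_2]$ are partitioned conformably, and set $\bOmega_1 := \bV_1^T\bOmega \in \bbR^{r\times k}$ and $\bOmega_2 := \bV_2^T\bOmega$. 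Orthogonality of the $\bV$-factor makes $\bOmega_1,\bOmega_2$ independent standard Gaussian matrices, and since $r < k$, $\bOmega_1$ has full row rank almost surely, so $\bOmega_1\bOmega_1^{\dagger} = \bI$.

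For the structural step I would plug the (generally suboptimal) choice $\bX = \bOmega_1^{\dagger}\bV_1^T$ into the minimization. Using $\bA\bOmega = \bU_1\bSigma_1\bOmega_1 + \bU_2\bSigma_2\bOmega_2$ and $\bOmega_1\bOmega_1^{\dagger} = \bI$, a one-line block computation gives $\bA - \bA\bOmega\bX = \bU_2\bSigma_2\big(\bV_2^T - \bOmega_2\bOmega_1^{\dagger}\bV_1^T\big)$. Since $\bU_2$ has orthonormal columns and $\bV_2^T\bV_1 = \bzero$, the two terms are Frobenius-orthogonal, so
\[
\norm{\bA - \hat{\bA}}_F^2 \;\le\; \norm{\bSigma_2\bV_2^T}_F^2 + \norm{\bSigma_2\bOmega_2\bOmega_1^{\dagger}\bV_1^T}_F^2 \;=\; \suml_{j>r}\sigma_j^2 \;+\; \norm{\bSigma_2\bOmega_2\bOmega_1^{\dagger}}_F^2 ,
\]
where I used that $\bV_1,\bV_2$ have orthonormal columns. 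This holds almost surely.

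It remains to take expectations and bound $\bbE\norm{\bSigma_2\bOmega_2\bOmega_1^{\dagger}}_F^2$. First, conditionally on $\bOmega_1$, I would invoke the identity $\bbE_{\bG}\norm{\bM\bG\bN}_F^2 = \norm{\bM}_F^2\norm{\bN}_F^2$, valid for any fixed $\bM,\bN$ and standard Gaussian $\bG$ (immediate from $\bbE[\bG^T\bM^T\bM\bG] = \norm{\bM}_F^2\,\bI$), applied with $\bM = \bSigma_2$, $\bG = \bOmega_2$, $\bN = \bOmega_1^{\dagger}$; this yields $\bbE\norm{\bSigma_2\bOmega_2\bOmega_1^{\dagger}}_F^2 = \big(\suml_{j>r}\sigma_j^2\big)\,\bbE\norm{\bOmega_1^{\dagger}}_F^2$. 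Second, $\norm{\bOmega_1^{\dagger}}_F^2 = \tr\big((\bOmega_1\bOmega_1^T)^{-1}\big)$, with $\bOmega_1\bOmega_1^T$ an $r\times r$ white Wishart matrix on $k$ degrees of freedom, and its expected trace equals $\tfrac{r}{k-r-1}$ whenever $k - r - 1 > 0$, i.e. $r < k-1$. Substituting gives exactly $\bbE\norm{\bA - \hat{\bA}}_F^2 \le \big(1 + \tfrac{r}{k-r-1}\big)\suml_{j>r}\sigma_j^2$.

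The only step that is not a routine manipulation is the inverse-Wishart trace identity $\bbE\,\tr\big((\bOmega_1\bOmega_1^T)^{-1}\big) = \tfrac{r}{k-r-1}$; this is where the hypothesis $r < k-1$ is used and must be brought in as a known fact (or derived separately, e.g. from the Wishart density or a recursion on $r$). The two genuinely creative moves in the rest of the argument are rewriting the projection error as the value of a least-squares problem and then testing it against the specific matrix $\bX = \bOmega_1^{\dagger}\bV_1^T$; everything after that is bookkeeping.
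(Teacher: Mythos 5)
Your proof is correct, and the argument you give is the canonical one from the cited literature. The paper itself does not prove Theorem~\ref{thm: halko_bound}: it is quoted as Theorem~8.1 of \cite{tropp2023randomized}, which in turn follows the original Halko--Martinsson--Tropp analysis in \cite{halko2011finding}. Your reduction to $\min_{\bX}\norm{\bA-\bA\bOmega\bX}_F$ (the same variational reformulation the paper records as Lemma~\ref{lm: the_obj}), the block SVD split $\bA=\bU_1\bSigma_1\bV_1^T+\bU_2\bSigma_2\bV_2^T$, the test point $\bX=\bOmega_1^{\dagger}\bV_1^T$ exploiting the rotational invariance that makes $\bOmega_1,\bOmega_2$ independent Gaussians, the second-moment identity $\bbE\norm{\bM\bG\bN}_F^2=\norm{\bM}_F^2\norm{\bN}_F^2$, and the inverse-Wishart trace $\bbE\,\tr\bigl((\bOmega_1\bOmega_1^T)^{-1}\bigr)=\tfrac{r}{k-r-1}$ for $r<k-1$ are exactly the steps in that reference; this is the intended provenance of the bound. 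All the bookkeeping checks out (the Frobenius orthogonality uses $\bV_1^T\bV_2=\bzero$, the full-row-rank claim for $\bOmega_1$ holds a.s.\ since $r<k$, and the Wishart hypothesis is what forces $r<k-1$). Your remark that the inverse-Wishart identity is the one non-elementary ingredient is accurate and is also where the cited proof appeals to a known formula rather than re-deriving it.
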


The work of \cite{witten2015randomized} approached the problem from a different angle, establishing {\bf worst-case} lower bounds. We do not quote the exact statement from \cite{witten2015randomized} to avoid confusion, as the approximation error in \cite{witten2015randomized} is measured with respect to the {\bf operator norm}. However, we believe that their results can be transferred to the case of the Frobenius norm with minor adjustments.  To give a flavor of their main result, they constructed examples of matrices $\bA$, for which the bounds from \cite{halko2011finding}  match the actual performance up to a multiplicative factor of $1 - \epsilon$ for any $\epsilon > 0$. This result demonstrates that the upper bounds of \cite{halko2011finding} cannot be improved for all $\bA$ simultaneously. 

Our results differ from these prior works in several ways:

\begin{enumerate}
\item {\bf Asymptotic exactness vs.\ upper bounds}: While previous works provide upper bounds (and matching lower bounds in worst case), we establish asymptotic equivalences. 

\item {\bf Full spectral characterization}: The bound in Theorem \ref{thm: halko_bound} depends only on the tail sum $\sum_{j>r} \sigma_j^2$. In contrast, our implicit characterization involves all singular values $\sigma_1, \ldots, \sigma_m$, capturing how the entire spectral distribution affects the approximation error. This gives a more nuanced picture: two matrices with the same tail sum can have different approximation errors depending on their full spectra.

\item {\bf Unified framework for filtered variants}: While previous analyses treat different algorithmic variants separately, Theorem~\ref{thm: err_filt} provides a unified framework. By introducing polynomial filters $\chi$, we can analyze many algorithms simultaneously.

\item {\bf Instance-specific vs.\ worst-case analysis}: The lower bounds of \cite{witten2015randomized} are worst-case results.  Our results are instance-specific, providing the exact asymptotic error for any given singular value distribution. 
\end{enumerate}




\section{Main Results}

We would like to derive asymptotic predictions for the performance of the RSVD. This requires the following definitions: 


\begin{definition}[RSVD Admissible Sequence]
    We call an {\it RSVD admissible sequence} any sequence of the form $\{(n_i, d_i, m_i, k_i, \bA^{(i)})\}_{i \in \mathbb{N}}$, where $\bA^{(i)} \in \bbR^{n_i \times d_i}$ and $k_i \le \rk{\bA^{(i)}} = m_i$ are such that $k_i \to \infty$. We will denote the non-zero singular values of $\bA^{(i)}$ by $\sigma_1^{{i}}, \dots, \sigma_{m_i}^{{i}}$. 
\end{definition}

\begin{definition}[Asymptotic Equivalence]
    Given two sequences $\{a_i\}_{i \in \mathbb{N}}$ and $\{b_i\}_{i \in \mathbb{N}}$, we say that they are asymptotically equivalent if $\lim_{i \to \infty} \frac{a_i}{b_i} = 1$. We denote this by $a_i \sim b_i$. 
\end{definition}

\noindent For notational brevity, we shall henceforth drop the index $i$ and write $\bA \in \mathbb{R}^{n \times d}$ to denote a generic element of the ensemble, whose singular values are $\sigma_1, \dots, \sigma_m$, with the understanding that all statements refer to the asymptotic regime as the matrix dimensions grow. We will drop the index $i$ from the dimensions too.

\subsection{Main Theorems}
In this section we characterize precisely the approximation error of RSVD with and without polynomial filters.
\begin{theorem}[Asymptotic Performance of RSVD]\label{thm: err}
    Let $(n, d, m, k, \bA)$ be an RSVD admissible sequence. Define $\tTheta$ as the unique positive solution of
    $$\sum_{j=1}^{m} \frac{\sigma^2_j}{\tTheta+k\sigma^2_j} = 1$$
    Denote the output of Algorithm \ref{alg:randomized_approx} with a Gaussian matrix $\bOmega$ applied to $\bA$ by $\hat{\bA}(\bOmega)$. Then the following asymptotic equivalence holds: 
    \begin{align*}
        \bbE_{\bOmega \in \bbR^{d \times k}} \|\bA - \hat{\bA}(\bOmega)\|_F^2  \sim \tTheta
    \end{align*}
\end{theorem}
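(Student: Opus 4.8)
The plan is to express the approximation error of Algorithm~\ref{alg:randomized_approx} in a form amenable to Gaussian comparison, then apply the Convex Gaussian Min-Max Theorem (CGMT) to reduce the high-dimensional random optimization to a scalar fixed-point equation. First I would rewrite $\|\bA - \hat{\bA}(\bOmega)\|_F^2 = \|\bA - \bQ\bQ^T\bA\|_F^2 = \|(\bI - \bQ\bQ^T)\bA\|_F^2$, and observe that since $\bQ$ is an orthonormal basis for the column space of $\bY = \bA\bOmega$, the projector $\bI - \bQ\bQ^T$ is the orthogonal projector onto $(\operatorname{range}(\bA\bOmega))^\perp$. Working in the SVD coordinates of $\bA = \bU\bSigma\bV^T$, the error becomes a function of $\bSigma$ and the Gaussian matrix $\bG := \bV^T\bOmega$ (still i.i.d.\ Gaussian by rotational invariance), namely $\|(\bI - \bP_{\bSigma\bG})\bSigma\|_F^2$ where $\bP_{\bSigma\bG}$ projects onto the column span of $\bSigma\bG$. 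The key algebraic step is to express this squared error as an optimization: $\|(\bI - \bP_{\bSigma\bG})\bSigma\|_F^2 = \min_{\bW \in \bbR^{k \times d}} \|\bSigma - \bSigma\bG\bW\|_F^2$, i.e.\ a least-squares regression of $\bSigma$ onto the random subspace spanned by the columns of $\bSigma\bG$.

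Next I would set up the CGMT. The objective $\min_{\bW} \|\bSigma - \bSigma\bG\bW\|_F^2$ is, after introducing a dual variable $\bU$ for the Frobenius norm via $\|\bX\|_F = \max_{\|\bU\|_F \le 1} \langle \bU, \bX\rangle$, a bilinear form in the Gaussian matrix $\bG$ of the type handled by Gordon's comparison inequality: $\max_{\bU}\min_{\bW} \langle \bU, \bSigma - \bSigma\bG\bW\rangle$ becomes $\langle \bU, \bSigma\rangle - \langle \bSigma^T\bU, \bG\bW\rangle$, and the $\bG$-dependent term is replaced in the Auxiliary Optimization by $\|\bW\|_F \bg^T(\bSigma^T\bU) + \|\bSigma^T\bU\|\,\bh^T\bW$-type scalarizations with independent Gaussian vectors $\bg,\bh$. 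Carrying out the resulting scalar optimization — exploiting that everything depends on $\bSigma$ only through the coordinatewise quantities $\sigma_j^2$ — should collapse to a one-dimensional convex-concave problem whose stationarity condition is exactly $\sum_j \frac{\sigma_j^2}{\tTheta + k\sigma_j^2} = 1$, with optimal value $\tTheta$. I would identify $\tTheta$ as a Lagrange-multiplier-like scalar: the factor $k$ appears because $\bW$ has $k$ rows, contributing a $\sqrt{k}$ scaling to the Gaussian term $\bh^T\bW$, whose square gives the $k\sigma_j^2$ term.

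The main obstacle I anticipate is handling the matrix-valued (rather than vector-valued) nature of the optimization cleanly within the CGMT framework: the standard CGMT is stated for $\min_{\bw}\max_{\bu} \bu^T\bG\bw + \psi(\bw,\bu)$ with vectors, and here $\bW$, $\bU$ are matrices with $d$ and $k$ as growing dimensions. I would need either to vectorize and verify that the resulting Gaussian structure (a Kronecker-type dependence) still fits a multivariate Gordon inequality, or to invoke a matrix version of the CGMT; care is required to check the convexity-concavity hypotheses and the compactness needed for the min-max interchange, and to rule out degenerate behavior as $k \to \infty$ with $k \le m$. A secondary technical point is the passage from the high-probability CGMT conclusion about the random optimum to the statement about the \emph{expectation} $\bbE_{\bOmega}\|\bA - \hat{\bA}(\bOmega)\|_F^2$; this requires a uniform-integrability or boundedness argument, e.g.\ controlling the error by $\|\bA\|_F^2$ deterministically and invoking dominated convergence, or a concentration bound showing the error concentrates tightly around $\tTheta$ so that its mean matches asymptotically. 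I would also verify existence and uniqueness of the positive root $\tTheta$ by noting the left-hand side $\sum_j \sigma_j^2/(\tTheta + k\sigma_j^2)$ is continuous, strictly decreasing in $\tTheta > 0$, tends to $\frac{1}{k}\sum_j \mathbf{1} = m/k \ge 1$ as $\tTheta \to 0^+$ (wait — more precisely it tends to $\sum_j 1/k = m/k$, and since $k \le m$ this is $\ge 1$) and to $0$ as $\tTheta \to \infty$, so the equation $=1$ has a unique solution.
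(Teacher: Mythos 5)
Your overall strategy (SVD coordinates, least-squares variational reformulation via Lemma~\ref{lm: the_obj}, Gordon/CGMT) matches the paper's, and you correctly identify the matrix-valued nature of the min--max as the main obstacle --- but you do not supply the idea that resolves it, and your proposed alternatives (vectorization with a Kronecker-structured Gordon inequality, or a ``matrix CGMT'') are not what is needed. The paper's resolution is a \emph{column-wise decomposition} after dualizing the squared loss by its Fenchel conjugate, $\|\bX\|_F^2 = \max_{\bC}\,\mathrm{tr}(\bC^T\bX) - \tfrac{1}{4}\|\bC\|_F^2$. Because $\mathrm{tr}(\bC^T\bX)=\sum_i \bC_i^T\bX_i$ and $\|\bC\|_F^2=\sum_i\|\bC_i\|^2$ are both separable across columns, the matrix min--max $\min_{\bB}\max_{\bC}$ factors exactly into $m$ independent vector-valued problems $\min_{\bB_i\in\bbR^k}\max_{\bC_i\in\bbR^m}\,\bC_i^T(\sigma_i\be_i - \bSigma\bOmega\bB_i) - \tfrac14\|\bC_i\|^2$, each with the bilinear term $(\bSigma\bC_i)^T\bOmega\,\bB_i$ in exactly the $\ba^T\bG\bb$ form that the scalar Gordon comparison handles. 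Linearity of expectation then lets you sum the $m$ contributions $c_i = \sigma_i^2/(1+\theta\sigma_i^2)$ even though they share the same $\bOmega$. Without this decomposition step the rest of your outline cannot be executed.

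A second, related gap is in your choice of dualization: you propose $\|\bX\|_F = \max_{\|\bU\|_F\le1}\langle\bU,\bX\rangle$. This is the \emph{unsquared} norm (you need the squared error), and more importantly the ball constraint $\|\bU\|_F\le1$ couples all columns of $\bU$, so it destroys the separability that the column-wise decomposition requires. The paper's penalty form $-\tfrac14\|\bC\|_F^2$ is precisely what makes the objective additive across columns. Your heuristic explanation of where the factor $k$ comes from (``$\bW$ has $k$ rows, contributing a $\sqrt{k}$ scaling'') is roughly in the right direction but imprecise: in the paper the $k$ enters via $\|\bh\|^2\to k$ for $\bh\in\bbR^k$ standard Gaussian, which fixes the scalar equation $\sum_j \frac{1}{1+\theta\sigma_j^2}=m-k$ (Lemma~\ref{lm: HW_bound} plus Lemma~\ref{lm: phi_hw_exp}), and then a short algebraic identity converts $\sum_i c_i = k/\theta$ into the stated fixed-point equation for $\tTheta$. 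Your remaining points --- compactness of the feasible sets (the paper's Lemma~\ref{lm: solution_norm_bounds}), passing from high-probability to expectation (the paper's Lemma~\ref{lemma: po_ao_comparison} gives two-sided probability comparisons), and existence/uniqueness of the root $\tTheta$ --- are all correctly flagged and handled in the standard way.
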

We require the following formal definition of the polynomial filters.
\begin{definition}[Polynomial filters]
    For the purposes of the present work, a filter is an arbitrary polynomial $\chi: \bbR \to \bbR$. Given a matrix $\bX$ with SVD decomposition $\bX = \bU \bSigma \bV$, we define $\chi(\bX) = \bU \chi(\bSigma) \bV$, where $\chi(\bSigma)$ is applied element wise to each element of the diagonal. 
\end{definition}

\begin{theorem}\label{thm: err_filt}
    Let $(n, d, m, k, \bA)$ be an RSVD admissible sequence and $\chi$ be a filter. Define $\theta_0$ and $\tTheta$ as the unique positive numbers satisfying: 
    \begin{align*}
        \sum_{j=1}^m \frac{1}{1 + \theta_0 \chi(\sigma_j)^2} &= m - k \\
        \sum_{j=1}^m \frac{\sigma_j^2}{1 + \theta_0\chi(\sigma_j)^2} &= \tTheta 
    \end{align*}
     Denote the output of Algorithm \ref{alg:randomized_approx_filters} with a Gaussian matrix $\bOmega$ applied to $\bA$ by $\hat{\bA}(\bOmega)$. Then the following asymptotic equivalence holds :
    \begin{align*}
        \bbE_{\bOmega \in \bbR^{d \times k}}  \|\bA - \hat{\bA}(\bOmega)\|_F^2  \sim \tTheta
    \end{align*}
\end{theorem}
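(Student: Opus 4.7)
The plan is to generalize the argument one would use for Theorem~\ref{thm: err} by tracking how the filter $\chi$ reshapes the sketching matrix. First, I would observe that $\bQ\bQ^T\bA$ is the orthogonal projection of the columns of $\bA$ onto $\mathrm{range}(\chi(\bA)\bOmega)$, so
\[
\|\bA-\hat{\bA}(\bOmega)\|_F^2=\min_{\bW\in\bbR^{k\times d}}\|\bA-\chi(\bA)\bOmega\bW\|_F^2.
\]
Writing $\bA=\bU\bSigma\bV^T$, using $\chi(\bA)=\bU\chi(\bSigma)\bV^T$, the Gaussian invariance $\bV^T\bOmega\stackrel{d}{=}\bOmega$, and the unitary invariance of $\|\cdot\|_F$, this reduces to an optimization depending only on the singular values $\sigma_j$, their filtered counterparts $\tau_j:=\chi(\sigma_j)$, and a fresh i.i.d.\ Gaussian $\bG$. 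Modulo boundary contributions from the cases $n,d>m$ or $\chi(0)\neq 0$, the problem becomes
\[
\bbE_{\bG}\min_{\bW}\|\bD_\sigma-\bT\bG\bW\|_F^2,\qquad \bD_\sigma=\diag(\sigma_j),\ \bT=\diag(\tau_j).
\]

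Next, I would dualize the squared Frobenius norm via $\|X\|_F^2=\max_{\bV}\{2\langle\bV,X\rangle-\|\bV\|_F^2\}$, obtaining a convex-concave saddle whose randomness enters only through the bilinear term $\tr(\bV^T\bT\bG\bW)$. The change of variable $\tilde{\bV}=\bT\bV$ places this in the canonical form $\tr(\tilde{\bV}^T\bG\bW)$ to which a matrix version of the Convex Gaussian Min-Max Theorem applies. CGMT then replaces $\bG$ by two independent Gaussian vectors and yields an Auxiliary Optimization (AO); concentration of Gaussian norms makes the AO deterministic to leading order.

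Performing the KKT analysis of the AO, I expect a scalar Lagrange multiplier $\theta_0$ to emerge as the dual variable enforcing the ``rank-$k$'' identity $\tr\!\bigl(\bT\bG(\bG^T\bT^2\bG)^{-1}\bG^T\bT\bigr)=k$. In the limit, the diagonal response in direction $j$ should concentrate to $\frac{\theta_0\tau_j^2}{1+\theta_0\tau_j^2}$, giving the first fixed-point equation $\sum_j(1+\theta_0\tau_j^2)^{-1}=m-k$, while the residual energy becomes $\sum_j \frac{\sigma_j^2}{1+\theta_0\tau_j^2}=\tTheta$, matching the theorem. Specializing $\chi(x)=x$ so that $\tau_j=\sigma_j$ reduces the two equations to $\sum_j\sigma_j^2/(\tTheta+k\sigma_j^2)=1$, recovering Theorem~\ref{thm: err} as a consistency check.

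I expect the main obstacle to be the matrix-valued CGMT step. Unlike the vector CGMT used in standard sharp-asymptotic analyses, here both $\bW$ and $\tilde{\bV}$ are matrices and the bilinear form $\tr(\tilde{\bV}^T\bG\bW)$ couples all columns through the single Gaussian $\bG$. One natural route is column-wise decoupling (each column pair is a vector CGMT problem whose cross-correlations vanish asymptotically); another is to invoke a matrix Gordon comparison directly. A secondary delicate point is the case $\chi(0)\neq 0$ combined with $m<\min(n,d)$, where $\chi(\bA)\bOmega$ picks up directions outside $\mathrm{range}(\bA)$ that do not help reconstruct $\bA$ and must be carefully accounted for in the reduction.
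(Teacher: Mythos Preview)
Your proposal is correct and follows essentially the same route as the paper: variational reformulation (the paper's Lemma~\ref{lm: the_obj}), SVD reduction to a diagonal problem in the pairs $(\sigma_j,\chi(\sigma_j))$, Fenchel dualization, a Gordon/CGMT comparison, and scalarization of the auxiliary optimization to obtain the two fixed-point equations. On your stated main obstacle: the paper takes precisely your first suggested route---after dualization the Frobenius objective decomposes \emph{exactly} (not merely asymptotically) as a sum over $i=1,\dots,m$ of separate $\max_{\bC_i}\min_{\bB_i}$ problems that happen to share the same Gaussian $\bOmega$, so the standard vector CGMT is applied to each column subproblem and no matrix-valued Gordon inequality is needed.
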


\begin{remark}

It should be noted that the expression we get for the approximation error for $\chi(x) = x$ in Theorem \ref{thm: err_filt} is not the same as the expression resulting from Theorem \ref{thm: err}. It can actually be shown that the resulting expressions are equivalent, but we prefer to keep the current form of Theorem \ref{thm: err}, as the corresponding equation appears simpler.   
    
\end{remark}

\subsection{Implications}

Define 
\begin{align*}
    & C = C(k,r,m) : = \frac{(m-k)k}{(m-r)(k-r)} 
\end{align*} 

The following propositions give asymptotic upper bounds for the approximation error of the RSVD in terms of an arbitrary $r < k$. Note that these bounds are marginally better than Theorem \ref{thm: halko_bound} since $r < k$. 

\begin{proposition}\label{prop: upp_bnd}
In terms of the notation from Theorem \ref{thm: err}, the following holds:
\begin{align*}
   \sum_{j = k + 1}^m \sigma_j^2 \le  \tTheta \le C \sum_{j = r + 1}^m \sigma_j^2
\end{align*}
\end{proposition}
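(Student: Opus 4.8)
The plan is to work entirely with the defining equation $\sum_{j=1}^m \frac{\sigma_j^2}{\tTheta + k\sigma_j^2} = 1$ for $\tTheta$ and extract the two bounds by comparison arguments. For the lower bound, I would observe that each summand satisfies $\frac{\sigma_j^2}{\tTheta + k\sigma_j^2} \le \frac{1}{k}$, so that splitting the sum into $j \le k$ and $j > k$ gives $1 = \sum_{j=1}^m \frac{\sigma_j^2}{\tTheta+k\sigma_j^2} \le \frac{k}{k} \cdot$(something)$\dots$ — more carefully, since at most $k$ of the terms can each contribute at most $1/k$, the remaining mass $\sum_{j>k}\frac{\sigma_j^2}{\tTheta + k\sigma_j^2}$ must be controlled from below; using $\tTheta + k\sigma_j^2 \le \tTheta + k\sigma_{k+1}^2$ wait — I want the inequality in the right direction. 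The clean route: from $\frac{\sigma_j^2}{\tTheta+k\sigma_j^2}\le \frac1k$ for all $j$, we get $\sum_{j=1}^k \frac{\sigma_j^2}{\tTheta+k\sigma_j^2}\le 1$, hence $\sum_{j>k}\frac{\sigma_j^2}{\tTheta+k\sigma_j^2}\ge 0$ is too weak; instead bound $\frac{\sigma_j^2}{\tTheta+k\sigma_j^2}\ge \frac{\sigma_j^2}{\tTheta+k\sigma_j^2}$ and use that for $j>k$ the denominator is smallest. I will instead use the substitution trick: multiply through and note $\tTheta = \sum_j \frac{\tTheta\sigma_j^2}{\tTheta+k\sigma_j^2} = \sum_j \sigma_j^2 - k\sum_j \frac{\sigma_j^4}{\tTheta+k\sigma_j^2}$, and similarly derive that $\tTheta \ge \sum_{j>k}\sigma_j^2$ by showing $\sum_{j\le k}\bigl(\sigma_j^2 - \frac{\tTheta\sigma_j^2}{\tTheta+k\sigma_j^2}\bigr) = \sum_{j\le k}\frac{k\sigma_j^4}{\tTheta+k\sigma_j^2} \ge \dots$; the key algebraic identity I want is that the defining equation is equivalent to $\tTheta = \sum_{j=1}^m \sigma_j^2 \cdot \frac{\tTheta}{\tTheta + k\sigma_j^2}$ combined with $\sum_j \frac{k\sigma_j^2}{\tTheta+k\sigma_j^2} = k$, i.e.\ the "weights" $w_j := \frac{k\sigma_j^2}{\tTheta+k\sigma_j^2}\in[0,1]$ sum to exactly $k$, and $\tTheta = \sum_j \sigma_j^2(1-w_j)$. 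Since $\sum w_j = k$ and $w_j\le 1$, the vector $(1-w_j)$ is nonnegative, sums to $m-k$, and is componentwise $\le 1$; by the rearrangement/majorization inequality $\sum_j \sigma_j^2(1-w_j)$ is minimized over such weight vectors by putting all the weight $w_j=1$ on the largest $\sigma_j$, giving exactly $\sum_{j>k}\sigma_j^2$. That yields the lower bound cleanly.

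For the upper bound $\tTheta \le C \sum_{j>r}\sigma_j^2$, I would use the same weight representation $\tTheta = \sum_j \sigma_j^2(1-w_j)$ with $\sum_j w_j = k$, together with an upper estimate on $1-w_j$. Split at index $r$: for $j\le r$ bound $\sigma_j^2(1-w_j)\le \sigma_r^2(1-w_j)$ crudely, but that reintroduces $\sigma_r^2$; instead I expect the argument to go through the other defining form. Rewrite the constraint as $\sum_j \frac{\sigma_j^2}{\tTheta/k + \sigma_j^2} = k$. For the $j\le r$ block each term is $\le 1$, contributing at most $r$; hence $\sum_{j>r}\frac{\sigma_j^2}{\tTheta/k+\sigma_j^2}\ge k-r$. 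Now for $j>r$ I overestimate $\frac{\sigma_j^2}{\tTheta/k+\sigma_j^2}\le \frac{\sigma_j^2}{\tTheta/k}\cdot$(no) — rather use $\frac{\sigma_j^2}{\tTheta/k + \sigma_j^2}\le \frac{k\sigma_j^2}{\tTheta}$ is false in general; use instead that $x\mapsto \frac{x}{a+x}$ is concave, so $\sum_{j>r}\frac{\sigma_j^2}{\tTheta/k+\sigma_j^2} \le (m-r)\frac{\bar\sigma^2}{\tTheta/k+\bar\sigma^2}$ with $\bar\sigma^2 = \frac{1}{m-r}\sum_{j>r}\sigma_j^2$. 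Combining $(m-r)\frac{\bar\sigma^2}{\tTheta/k+\bar\sigma^2}\ge k-r$ and solving the resulting linear inequality for $\tTheta$ gives $\tTheta \le \frac{k}{k-r}\bigl((m-r)-(k-r)\bigr)\bar\sigma^2 = \frac{k(m-k)}{k-r}\bar\sigma^2 = \frac{k(m-k)}{(k-r)(m-r)}\sum_{j>r}\sigma_j^2 = C\sum_{j>r}\sigma_j^2$, exactly as claimed.

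I expect the main obstacle to be justifying the two one-line "extremal" steps rigorously: for the lower bound, that among all weight vectors $w\in[0,1]^m$ with $\sum w_j = k$ the quantity $\sum_j \sigma_j^2(1-w_j)$ is minimized at the greedy assignment — this is a standard exchange argument but needs to be stated, and one must also confirm the actual $w_j$ arising from $\tTheta$ lies in the feasible set (it does, since $w_j\in(0,1)$ and $\sum w_j = k$ follow directly from the defining equation after clearing denominators); for the upper bound, the use of Jensen's inequality on the concave map $x\mapsto x/(a+x)$ over the block $j>r$, and then the verification that the linear inequality inversion preserves direction (which it does because $\tTheta>0$ and all coefficients are positive when $r<k\le m$). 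Everything else is routine algebra; I would also remark that $\tTheta\le C\sum_{j>r}\sigma_j^2$ recovers Theorem \ref{thm: halko_bound} up to the stated constant since $C \le 1 + \frac{r}{k-r-1}$ when $m$ is large, completing the comparison with the literature.
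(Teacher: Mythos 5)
Your proof is correct. For the lower bound you take a genuinely different route from the paper: the paper simply invokes the Eckart--Young theorem (the optimal rank-$k$ approximation error is $\sum_{j>k}\sigma_j^2$, so any rank-$k$ approximation, in particular RSVD's, must do at least this badly in expectation), whereas you give a purely algebraic argument from the defining fixed-point equation. Your observation that $\tTheta = \sum_j \sigma_j^2(1-w_j)$ with $w_j = \tfrac{k\sigma_j^2}{\tTheta + k\sigma_j^2}\in(0,1)$ and $\sum_j w_j = k$, followed by the linear-programming/greedy bound $\max\{\sum_j \sigma_j^2 w_j : w\in[0,1]^m,\ \sum_j w_j = k\} = \sum_{j\le k}\sigma_j^2$, is clean and self-contained and does not require importing the Eckart--Young theorem or the fact that RSVD produces a rank-$k$ approximation; it shows the bound is a property of the scalar equation alone, which is a nice byproduct.

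For the upper bound, the underlying idea is the same as the paper's --- split the constraint at index $r$ and apply Jensen's inequality to the tail --- but you organize it more directly. The paper parametrizes by $\theta = k/\tTheta$, defines a candidate $\theta^C_*$, reduces to showing $f(\theta^C_*)\ge m-k$ where $f(x)=\sum_j \tfrac{1}{1+x\sigma_j^2}$, then takes an infimum over $\sigma_1,\dots,\sigma_r$ (sending them to $\infty$) before applying Jensen. You instead bound each head term of $\sum_j \tfrac{\sigma_j^2}{\tTheta/k+\sigma_j^2}$ by $1$ (which is exactly what the paper's $\sigma_j\to\infty$ limit accomplishes), apply Jensen with the concave map $x\mapsto x/(a+x)$ to the tail, and solve the resulting scalar inequality for $\tTheta$. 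This avoids the infimum reformulation and is arguably more transparent; it lands on exactly $C = \tfrac{(m-k)k}{(m-r)(k-r)}$. The two concerns you flag at the end --- feasibility of the actual weight vector $w$ and the direction of the linear-inequality inversion --- are both easily dispatched exactly as you say, so there is no gap.
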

Note that the lower bound in Proposition \ref{prop: upp_bnd} follows readily from Eckart-Young theorem. The following result provides an asymptotic upper bound for the approximation error of arbitrary polynomial filters.
\begin{proposition}\label{prop: upp_bnd_filt}
    In terms of the notation from Theorem \ref{thm: err_filt}, the following holds:
    \begin{align*}
        \tTheta \le 
 \sum_{j = r + 1}^m  \sigma_j^2 + \frac{C}{k}\sum_{j=1}^r \frac{\sigma_j^2}{\chi(\sigma_j)^2} \cdot \sum_{j = r + 1}^m\chi(\sigma_j)^2 
    \end{align*}
\end{proposition}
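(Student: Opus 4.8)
The plan is to bound $\tTheta$ by making a clever choice of a subset of indices in the two defining equations of Theorem~\ref{thm: err_filt} and using monotonicity. Recall that $\theta_0$ is pinned down by $\sum_{j=1}^m (1+\theta_0\chi(\sigma_j)^2)^{-1} = m-k$, and $\tTheta = \sum_{j=1}^m \sigma_j^2(1+\theta_0\chi(\sigma_j)^2)^{-1}$. The idea is to split the sum for $\tTheta$ into the ``head'' $j \le r$ and the ``tail'' $j > r$. For the tail, one simply bounds $(1+\theta_0\chi(\sigma_j)^2)^{-1} \le 1$, giving $\sum_{j>r}\sigma_j^2$ as the first term. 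The real work is controlling the head contribution $\sum_{j=1}^r \sigma_j^2(1+\theta_0\chi(\sigma_j)^2)^{-1}$, and this is where $\theta_0$ must be shown to be large.

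First I would establish a lower bound on $\theta_0$. From the defining equation, $\sum_{j=1}^m \frac{\theta_0\chi(\sigma_j)^2}{1+\theta_0\chi(\sigma_j)^2} = k$, so $\theta_0 \sum_{j=1}^m \frac{\chi(\sigma_j)^2}{1+\theta_0\chi(\sigma_j)^2} = k$; bounding each summand's denominator crudely would be too lossy, so instead I would isolate the relevant indices. A cleaner route: discard all but the top $r$ (or an appropriate count of) indices to get an inequality in the other direction. Concretely, since each term $\frac{1}{1+\theta_0\chi(\sigma_j)^2} \le 1$ and there are $m$ of them summing to $m-k$, at least $k$ ``units'' of mass have been removed; restricting attention to the interaction with the smallest singular values, one shows $1+\theta_0\chi(\sigma_j)^2 \ge \frac{(k-r)(m-r)}{(m-k)\,k}\cdot\frac{k}{?}$-type quantities — the precise combinatorial bookkeeping should reproduce the constant $C = \frac{(m-k)k}{(m-r)(k-r)}$ and a factor $\chi(\sigma_j)^2$ relative to a reference scale. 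The key algebraic identity to aim for is
\begin{align*}
\frac{\sigma_j^2}{1+\theta_0\chi(\sigma_j)^2} \le \frac{\sigma_j^2}{\theta_0\chi(\sigma_j)^2} \le \frac{C}{k}\cdot\frac{\sigma_j^2}{\chi(\sigma_j)^2}\sum_{\ell>r}\chi(\sigma_\ell)^2
\end{align*}
for each $j\le r$, i.e.\ showing $\theta_0 \ge \frac{k}{C\sum_{\ell>r}\chi(\sigma_\ell)^2}$. Summing this over $j = 1,\dots,r$ and adding the tail bound $\sum_{j>r}\sigma_j^2$ yields the claimed inequality.

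To prove the bound $\theta_0 \ge \frac{k}{C\sum_{\ell>r}\chi(\sigma_\ell)^2}$ I would return to $\sum_{j=1}^m \frac{1}{1+\theta_0\chi(\sigma_j)^2} = m-k$ and argue by contradiction or monotonicity: if $\theta_0$ were smaller, the left side would be larger than $m-k$. For this one needs that $\sum_{j=1}^m \frac{1}{1+t\chi(\sigma_j)^2} > m-k$ when $t = \frac{k}{C\sum_{\ell>r}\chi(\sigma_\ell)^2}$, which reduces to $\sum_{j=1}^m \frac{t\chi(\sigma_j)^2}{1+t\chi(\sigma_j)^2} < k$. Using $\frac{x}{1+x}\le x$ would give $t\sum_j \chi(\sigma_j)^2$, too weak; instead split again into $j\le r$ (at most $r$ terms, each $\le 1$) and $j>r$ (bounded by $t\sum_{\ell>r}\chi(\sigma_\ell)^2$), obtaining $\le r + t\sum_{\ell>r}\chi(\sigma_\ell)^2 = r + \frac{k}{C}$. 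It remains to check $r + \frac{k}{C}\le k$, i.e.\ $\frac{k}{C}\le k-r$, i.e.\ $C \ge \frac{k}{k-r}$; since $C = \frac{(m-k)k}{(m-r)(k-r)}$ this is equivalent to $\frac{m-k}{m-r}\ge 1$, which is false for $k>r$. So the naive splitting of the $r$ head terms as ``$\le 1$'' is itself too lossy, and one must track them more carefully using $\frac{t\chi(\sigma_j)^2}{1+t\chi(\sigma_j)^2}$ with $\chi(\sigma_j)$ being the \emph{large} values.

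The main obstacle, therefore, is the sharp two-sided bookkeeping of how the constraint $\sum_j(1+\theta_0\chi(\sigma_j)^2)^{-1}=m-k$ forces $\theta_0$ to be large \emph{even though} the head singular values have large $\chi(\sigma_j)$: one must exploit that those head terms contribute essentially $0$ to the sum $m-k$ (not $1$), so the remaining $m-r$ terms must sum to roughly $m-k$, which is a tight constraint pushing $\theta_0$ up. Making this rigorous in a way that produces exactly the constant $C$ — rather than something slightly weaker — is the delicate part, and I expect it to mirror the Cauchy–Schwarz / Jensen argument that presumably underlies Proposition~\ref{prop: upp_bnd}. Once $\theta_0 \ge k/(C\sum_{\ell>r}\chi(\sigma_\ell)^2)$ is in hand, the rest is the routine substitution and summation sketched above.
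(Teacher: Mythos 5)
Your decomposition of $\tTheta$ into head ($j\le r$) and tail ($j>r$) contributions, the crude tail bound $(1+\theta_0\chi(\sigma_j)^2)^{-1}\le 1$, the head bound $\frac{\sigma_j^2}{1+\theta_0\chi(\sigma_j)^2}\le\frac{\sigma_j^2}{\theta_0\chi(\sigma_j)^2}$, and the reduction of everything to the single inequality $\theta_0 \ge \frac{k}{C\sum_{\ell>r}\chi(\sigma_\ell)^2} =: t_*$ are all exactly right and match the structure the paper uses for Proposition~\ref{prop: upp_bnd}. But you never actually establish $\theta_0\ge t_*$: the one attempt you carry to completion (bounding the $r$ head terms of $\sum_j \frac{t_*\chi(\sigma_j)^2}{1+t_*\chi(\sigma_j)^2}$ by $1$ and the tail by $t_*\sum_{\ell>r}\chi(\sigma_\ell)^2$) fails, as you correctly note, and you leave the ``sharp two-sided bookkeeping'' as a hope rather than an argument. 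That is a genuine gap, since $\theta_0\ge t_*$ is the entire content of the proposition once the head/tail split is in place.

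The missing step is Jensen's inequality, applied after discarding the head terms rather than crediting them with $1$ each. Concretely, since $f$ is decreasing it suffices to show $f(t_*)\ge m-k$. Dropping the non-negative head terms and using convexity of $x\mapsto\frac{1}{1+x}$ on the remaining $m-r$ terms gives
\begin{align*}
f(t_*) \;\ge\; \sum_{j=r+1}^m \frac{1}{1+t_*\chi(\sigma_j)^2}
\;\ge\; \frac{m-r}{\,1+\dfrac{t_*\sum_{j>r}\chi(\sigma_j)^2}{m-r}\,}
\;=\; \frac{m-r}{\,1+\dfrac{k}{C(m-r)}\,}
\;=\; \frac{m-r}{\,1+\dfrac{k-r}{m-k}\,}
\;=\; m-k,
\end{align*}
where the penultimate equality uses $\frac{k}{C(m-r)}=\frac{k(m-r)(k-r)}{(m-k)k(m-r)}=\frac{k-r}{m-k}$. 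Your observation that ``the head terms contribute essentially $0$, so the remaining $m-r$ terms must sum to roughly $m-k$'' is exactly the right intuition; the rigorous version is simply to lower-bound $f(t_*)$ by the tail alone and then Jensenize --- the constant $C$ is engineered precisely so that Jensen is tight. Once this is inserted, the rest of your argument goes through verbatim.
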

It turns out, in the case of the Bilevel Ensemble and the Power law Ensemble, the asymptotic error and the structure of an optimal filter can characterized explicitly as follows.
\begin{corollary}\label{cor: ensemble}
\begin{itemize}
    \item {\bf(Bilevel Ensemble).} Assume that \\$\sigma_1 = \dots = \sigma_r = a$ and $\sigma_{r+1} = \dots = \sigma_{m} = b$, with $a > b$. Then the approximation error would be
    \begin{align}\label{eq: cor_bilevel}
        &\bbE_{\bOmega \in \bbR^{d \times k}}  \|\bA - \hat{\bA}(\bOmega)\|_F^2   \\& \sim \frac{1}{2} \Bigl((r-k) a^2 + (m - r- k) b^2 + \nonumber \\ & \sqrt{((r-k) a^2 + (m - r- k) b^2)^2 + 4 a^2 b ^2 (m-k)} \Bigr) \nonumber
    \end{align}
    In this setting for $k \ge r +1$, the optimal polynomial filter is of form $\chi(\cdot):=(\cdot)^q$ where $q \rightarrow \infty$ and yields an error of $(m-k) b^2$, the fundamental lower bound from Proposition \ref{prop: upp_bnd}.
    \item {\bf(Power law Ensemble).} Assume that $\sigma_i = i^{-\alpha}$, then 
    \begin{align}\label{eq: cor_power_law}
        & \bbE_{\bOmega \in \bbR^{d \times k}}  \|\bA - \hat{\bA}(\bOmega)\|_F^2  \sim \frac{1}{\Bigl(k \sinc(\frac{\pi}{2\alpha} )\Bigr)^{2\alpha}} \\
        &\text{where }\sinc(x) = \frac{\sin(x)}{x} \nonumber
    \end{align}
    In this setting, $\chi(\cdot):=(\cdot)^q$ where $q \rightarrow \infty$ yields an error of $\sum_{i=k+1}^m i^{-2\alpha}$, the fundamental lower bound from Proposition \ref{prop: upp_bnd}.
\end{itemize}
\end{corollary}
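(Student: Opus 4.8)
The plan is to specialize Theorem \ref{thm: err} and Theorem \ref{thm: err_filt} to the two ensembles, using the defining fixed-point equations and taking the limit $q \to \infty$ for the filtered case. For the Bilevel Ensemble with no filter (equivalently $\chi(x) = x$), I would plug $\sigma_j \in \{a, b\}$ (with multiplicities $r$ and $m - r$) into the equation for $\tTheta$ in Theorem \ref{thm: err}, namely $\sum_j \sigma_j^2/(\tTheta + k\sigma_j^2) = 1$. This becomes $\tfrac{ra^2}{\tTheta + ka^2} + \tfrac{(m-r)b^2}{\tTheta + kb^2} = 1$, which after clearing denominators is a quadratic in $\tTheta$: I would verify the positive root matches \eqref{eq: cor_bilevel} by expanding $(\tTheta + ka^2)(\tTheta + kb^2) = ra^2(\tTheta + kb^2) + (m-r)b^2(\tTheta + ka^2)$ and reading off the coefficients, checking that the discriminant simplifies to $((r-k)a^2 + (m-r-k)b^2)^2 + 4a^2b^2(m-k)$ after the $k^2a^2b^2$ cross term is absorbed. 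For the optimality claim, I would use Theorem \ref{thm: err_filt} with $\chi(x) = x^q$: the first equation $\sum_j (1 + \theta_0 \sigma_j^{2q})^{-1} = m - k$ splits as $r(1 + \theta_0 a^{2q})^{-1} + (m-r)(1 + \theta_0 b^{2q})^{-1} = m - k$; since $a > b$, as $q \to \infty$ one needs $\theta_0 a^{2q} \to \infty$ while $\theta_0 b^{2q}$ stays bounded so that the second term contributes $(m - r) - (k - r) = m - k$ in the limit, forcing $(m-r)(1 + \theta_0 b^{2q})^{-1} \to m - k$, i.e. $\theta_0 b^{2q} \to \tfrac{k - r}{m - k}$... and then $\tTheta = \sum_j \sigma_j^2 (1 + \theta_0\sigma_j^{2q})^{-1} \to 0 \cdot r \cdot a^2 + (m - r) b^2 \cdot \tfrac{m-k}{m-r} = (m-k)b^2$, matching the lower bound of Proposition \ref{prop: upp_bnd}.

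For the Power law Ensemble $\sigma_i = i^{-\alpha}$ with no filter, I would again use Theorem \ref{thm: err}: $\sum_{j=1}^m \tfrac{j^{-2\alpha}}{\tTheta + k j^{-2\alpha}} = 1$, i.e. $\sum_{j=1}^m \tfrac{1}{\tTheta j^{2\alpha} + k} = 1$. In the asymptotic regime $k \to \infty$ (and $m$ large), I would approximate the sum by the integral $\int_0^\infty \tfrac{dx}{\tTheta x^{2\alpha} + k}$, which via the substitution $u = (\tTheta/k)^{1/2\alpha} x$ evaluates to $\tfrac{1}{k}(\tTheta/k)^{-1/2\alpha} \int_0^\infty \tfrac{du}{1 + u^{2\alpha}}$. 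Using the standard identity $\int_0^\infty \tfrac{du}{1 + u^{2\alpha}} = \tfrac{\pi/(2\alpha)}{\sin(\pi/(2\alpha))} = 1/\sinc(\tfrac{\pi}{2\alpha})$, setting this equal to $1$ gives $(\tTheta/k)^{1/2\alpha} = \tfrac{1}{k}\cdot \tfrac{1}{\sinc(\pi/(2\alpha))}$, hence $\tTheta = k \cdot \bigl(k\,\sinc(\tfrac{\pi}{2\alpha})\bigr)^{-2\alpha}$... wait, I need $\tTheta = k^{1-2\alpha} \sinc(\tfrac{\pi}{2\alpha})^{-2\alpha}$; I would double-check the bookkeeping of powers of $k$ to land on $\bigl(k\,\sinc(\tfrac{\pi}{2\alpha})\bigr)^{-2\alpha}$ as stated in \eqref{eq: cor_power_law} (the discrepancy suggests the intended normalization or a shift of index that I would reconcile carefully). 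For the filter part, $\chi(x) = x^q$ gives $\sum_j (1 + \theta_0 j^{-2\alpha q})^{-1} = m - k$; as $q \to \infty$ the terms with $j^{-2\alpha q}\theta_0$ bounded contribute, and a careful analysis shows $\tTheta \to \sum_{i > k} i^{-2\alpha}$, the Eckart--Young floor.

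The main obstacle I anticipate is making the integral approximation in the Power law case rigorous within the stated asymptotic-equivalence framework: I must control the error between the Riemann sum $\sum_{j=1}^m (\tTheta j^{2\alpha} + k)^{-1}$ and the integral uniformly as $k, m \to \infty$, and confirm that the finite cutoff $m$ (versus $\infty$) does not affect the leading order — this requires knowing how $m$ scales relative to $k$, or arguing the tail $\sum_{j > m}$ is negligible because $\tTheta j^{2\alpha}$ dominates there. A secondary subtlety is the $q \to \infty$ limit in Theorem \ref{thm: err_filt}: the fixed-point equations must be shown to pass to the limit continuously, which amounts to a dominated-convergence argument on each term of the finite sums, together with the observation that the "active" indices (those with $\theta_0\chi(\sigma_j)^2 = O(1)$) are precisely $j = k+1, \dots$ in the sorted order. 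I would also verify the bilevel optimality claim is genuinely optimal (not merely that $x^q$ achieves the floor) by noting Proposition \ref{prop: upp_bnd} shows $(m-k)b^2$ is a universal lower bound on $\tTheta$, so any filter attaining it is optimal.
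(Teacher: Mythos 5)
Your overall route matches the paper's: specialize the fixed-point equations of Theorems~\ref{thm: err} and~\ref{thm: err_filt} to the two spectra and analyze the $q\to\infty$ limit for $\chi(x)=x^q$. For the bilevel case your quadratic solution and your $q\to\infty$ limit analysis ($\theta_0 a^{2q}\to\infty$, $\theta_0 b^{2q}\to\frac{k-r}{m-k}$, $\tTheta\to(m-k)b^2$) are correct; the paper instead rewrites $\tTheta$ as an explicit convex combination of $a^2$ and $b^2$ with weights governed by the ratio $\frac{1+\theta_0\chi(a)^2}{1+\theta_0\chi(b)^2}$, which directly exhibits \emph{why} maximizing $\chi(a)^2/\chi(b)^2$ (hence $q\to\infty$) is optimal among all polynomial filters, rather than invoking Proposition~\ref{prop: upp_bnd} to certify the floor as you do. Both routes are valid; the paper's is more structural, yours is more computational but you correctly close the optimality gap via the Eckart--Young lower bound.

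On the power-law ensemble: the paper offloads the integral approximation to \cite{simon2023more,ildiz2024high}, whereas you carry it out explicitly, and in doing so you have caught a genuine inconsistency. Your computation --- starting from $\sum_j (\tTheta j^{2\alpha}+k)^{-1}=1$, substituting $u=(\tTheta/k)^{1/(2\alpha)}x$, and using $\int_0^\infty (1+u^{2\alpha})^{-1}\,du = 1/\sinc(\pi/(2\alpha))$ --- yields
\begin{align*}
\tTheta \sim \frac{k}{\bigl(k\,\sinc(\pi/(2\alpha))\bigr)^{2\alpha}} = k^{1-2\alpha}\,\sinc\!\Bigl(\frac{\pi}{2\alpha}\Bigr)^{-2\alpha},
\end{align*}
which carries an extra factor of $k$ relative to the paper's stated \eqref{eq: cor_power_law}. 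Your expression is the correct one: it is also what one obtains by running the Theorem~\ref{thm: err_filt} route with $p=1$ (solving $\theta_0 \approx (k\,\sinc(\pi/(2\alpha)))^{2\alpha}$ and then $\tTheta=k/\theta_0$), and it is the only one consistent with the universal Eckart--Young floor $\tTheta \ge \sum_{j>k} j^{-2\alpha} \asymp \frac{k^{1-2\alpha}}{2\alpha-1}$, which the paper's $k^{-2\alpha}$ would violate for large $k$. Your hesitation about the ``bookkeeping of powers of $k$'' was exactly the right instinct; there is no index shift or renormalization that reconciles the paper's formula with its own fixed-point equation. The secondary concerns you raise --- rigor of the Riemann-sum approximation (needing $\alpha>1/2$ and control of the $j>m$ truncation) and continuity of the fixed point in $q$ --- are real but routine, and are the same issues the paper silently delegates to the cited references.
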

\begin{remark}
    For the bilevel ensemble, the upper bound in Theorem \ref{thm: halko_bound} yields 
    \begin{align}\label{eq: halko_bilevel}
    \frac{k-1}{k-r-1} (m-r) b^2
    \end{align}
    whereas, assuming $k = o(m)$ and $\frac{a}{b} = \Theta(1)$, \eqref{eq: cor_bilevel} asymptotically equals  $$(m - k) b^2$$ If the difference $k-r$ is constant, we conclude that our prediction \eqref{eq: cor_bilevel} is tighter than \eqref{eq: halko_bilevel} by a factor of $\Theta(k)$. 
\end{remark}
\begin{remark}
    For the power law ensemble, the upper bound in Theorem \ref{thm: halko_bound} yields approximately
    \begin{align}\label{eq: halko_power_law}
         \frac{k-1}{k-r-1} \cdot \frac{1}{2 \alpha - 1} \cdot \frac{1}{r^{2 \alpha-1}}
    \end{align}
    The bound \eqref{eq: halko_power_law} is $\Theta(k^2)$- fold greater than \eqref{eq: cor_power_law} if the oversampling $k-r$ is constant and $\Theta(k)$- fold greater if $r$ is a finite fraction of $k$.
\end{remark}

\section{Our Approach}

\subsection{Variational reformulation}
The following key lemma provides a variation formulation for the RSVD algorithm which allows us to apply Gaussian Comparison Inequalities\cite{Gordon1985SomeIF, Gordon88}. 
\begin{lemma}\label{lm: the_obj}
Let $\chi$ be a polynomial filter and assume that $\mathrm{rank}(\chi(\mathbf{A})\boldsymbol{\Omega}) = k$ under the notation of Algorithm \ref{alg:randomized_approx_filters}. Then the output of Algorithm \ref{alg:randomized_approx_filters} can be found as $\hat{\mathbf{A}} = \chi(\mathbf{A})\boldsymbol{\Omega} \mathbf{B}$, where $\mathbf{B}$ is defined uniquely via the following objective:
\begin{align*}
   \min_{\mathbf{B} \in \mathbb{R}^{k \times d}} \|\mathbf{A} - \chi(\mathbf{A})\boldsymbol{\Omega} \mathbf{B}\|_F^2
\end{align*}
When $\chi(x) = x$, this description captures the output of Algorithm \ref{alg:randomized_approx}. 
\end{lemma}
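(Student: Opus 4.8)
The plan is to show that the RSVD output $\hat{\mathbf{A}} = \mathbf{Q}(\mathbf{Q}^T\mathbf{A})$ coincides with the least-squares projection of $\mathbf{A}$ onto the column space of $\mathbf{Y} := \chi(\mathbf{A})\boldsymbol{\Omega}$, and then to re-express that projection as the minimizer of the stated Frobenius objective. The crucial observation is that both $\mathbf{Q}$ (from the QR factorization $\mathbf{Y} = \mathbf{Q}\mathbf{R}$) and $\mathbf{Y}$ itself span the same $k$-dimensional subspace of $\mathbb{R}^n$, precisely because $\mathrm{rank}(\mathbf{Y}) = k$ forces $\mathbf{R} \in \mathbb{R}^{k \times k}$ to be invertible. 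Hence the orthogonal projector onto $\mathrm{col}(\mathbf{Y})$ is $\mathbf{P} := \mathbf{Q}\mathbf{Q}^T = \mathbf{Y}(\mathbf{Y}^T\mathbf{Y})^{-1}\mathbf{Y}^T$, and $\hat{\mathbf{A}} = \mathbf{P}\mathbf{A}$.

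**First I would** record that $\mathbf{Y} = \mathbf{Q}\mathbf{R}$ with $\mathbf{R}$ invertible, so $\mathbf{Q} = \mathbf{Y}\mathbf{R}^{-1}$ and $\mathrm{col}(\mathbf{Q}) = \mathrm{col}(\mathbf{Y})$; consequently $\mathbf{Q}\mathbf{Q}^T = \mathbf{Y}(\mathbf{Y}^T\mathbf{Y})^{-1}\mathbf{Y}^T$ (both equal the orthogonal projector onto that subspace, using $\mathbf{Q}^T\mathbf{Q} = \mathbf{I}_k$). **Next I would** invoke the standard fact that for a fixed subspace with orthonormal basis $\mathbf{Q}$, the matrix $\mathbf{Q}(\mathbf{Q}^T\mathbf{A})$ is the unique minimizer over $\mathbf{C} \in \mathbb{R}^{k \times d}$ of $\|\mathbf{A} - \mathbf{Q}\mathbf{C}\|_F^2$ (this is just the normal equations for an orthonormal design, solved column by column). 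Reparametrizing via $\mathbf{C} = \mathbf{R}\mathbf{B}$, which is a bijection $\mathbb{R}^{k\times d} \to \mathbb{R}^{k \times d}$ since $\mathbf{R}$ is invertible, gives $\mathbf{Q}\mathbf{C} = \mathbf{Y}\mathbf{B}$, so $\min_{\mathbf{C}}\|\mathbf{A} - \mathbf{Q}\mathbf{C}\|_F^2 = \min_{\mathbf{B}}\|\mathbf{A} - \mathbf{Y}\mathbf{B}\|_F^2$ and the minimizing $\mathbf{B}$ is unique and equals $\mathbf{R}^{-1}\mathbf{Q}^T\mathbf{A} = (\mathbf{Y}^T\mathbf{Y})^{-1}\mathbf{Y}^T\mathbf{A}$. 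Then $\mathbf{Y}\mathbf{B} = \mathbf{P}\mathbf{A} = \mathbf{Q}\mathbf{Q}^T\mathbf{A} = \hat{\mathbf{A}}$, which is exactly the claim with $\hat{\mathbf{A}} = \chi(\mathbf{A})\boldsymbol{\Omega}\mathbf{B}$. Finally, substituting $\chi(x) = x$ makes $\mathbf{Y} = \mathbf{A}\boldsymbol{\Omega}$, recovering Algorithm \ref{alg:randomized_approx}.

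**The main obstacle** is essentially bookkeeping rather than conceptual: one must be careful that the rank assumption $\mathrm{rank}(\chi(\mathbf{A})\boldsymbol{\Omega}) = k$ is genuinely used to guarantee (i) that $\mathbf{R}$ is square and invertible, so the QR factors and $\mathbf{Y}$ span the same space, and (ii) that $\mathbf{Y}^T\mathbf{Y}$ is invertible, so the minimizer $\mathbf{B}$ is unique. Without this assumption the column space of $\mathbf{Q}$ could be strictly larger than that of $\mathbf{Y}$, or the least-squares solution in $\mathbf{B}$ could fail to be unique, and the lemma as stated would break. I would state this explicitly and note that it holds almost surely when $k \le \mathrm{rank}(\chi(\mathbf{A}))$ and $\boldsymbol{\Omega}$ is Gaussian, which is the operative regime for the asymptotic theorems.
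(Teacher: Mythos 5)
Your proof is correct and follows essentially the same route as the paper: identify the least-squares minimizer via the normal equations, and use the QR factorization $\mathbf{Y}=\mathbf{Q}\mathbf{R}$ (with $\mathbf{R}$ invertible by the rank assumption) to show that $\mathbf{Y}\mathbf{B}^{*}=\mathbf{Q}\mathbf{Q}^{T}\mathbf{A}$ matches the algorithm's output. The only difference is direction of travel (the paper starts from the optimization problem and derives the projector, you start from the projector and reparametrize back), plus your explicit bookkeeping of where the rank hypothesis is used, which is a reasonable elaboration but not a different argument.
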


\begin{remark}
Define the approximation error of Algorithm \ref{alg:randomized_approx_filters} as
\begin{align}\label{eq: appr_err}
    \mathbb{E}_{\boldsymbol{\Omega} \in \mathbb{R}^{d \times k}} \|\mathbf{A} - \hat{\mathbf{A}}\|_F^2
\end{align}
We see from Lemma \ref{lm: the_obj} that \eqref{eq: appr_err} equals
\begin{align*}
    \mathbb{E}_{\boldsymbol{\Omega} \in \mathbb{R}^{d \times k}} \min_{\mathbf{B} \in \mathbb{R}^{k \times d}} \|\mathbf{A} - \chi(\mathbf{A})\boldsymbol{\Omega} \mathbf{B}\|_F^2
\end{align*}
\end{remark}

\subsection{Proof of Theorem \ref{thm: err} }

We present the proof of Theorem \ref{thm: err} below. The proof of Theorem \ref{thm: err_filt} follows the same steps and is omitted here due to the lack of space. 

\begin{proof}[Proof of Theorem \ref{thm: err} ]
Consider the truncated SVD decomposition of $\mathbf{A}$:
\begin{align*}
   & \mathbf{A} = \mathbf{U} \boldsymbol{\Sigma} \mathbf{V}, \text{ where } m = \rk\bA,
    \; \boldsymbol{\Sigma} = \sum_{1 \le i \le m} \sigma_i \mathbf{e}_i\mathbf{e}_i^T, \nonumber\\
    &\mathbf{U} \in \mathbb{R}^{n \times m}, \mathbf{V} \in \mathbb{R}^{m \times d}, \quad
    \mathbf{U}^T\mathbf{U} = \mathbf{I}_m = \mathbf{V}\mathbf{V}^T \nonumber
\end{align*}

Due to rotational invariance of the Frobenius norm we have: 
\begin{align*}
   \|\mathbf{A} - \mathbf{A}\boldsymbol{\Omega} \mathbf{B}\|_F^2  &=  \|\mathbf{U}\boldsymbol{\Sigma}\mathbf{V} -\mathbf{U}\boldsymbol{\Sigma}\mathbf{V}\boldsymbol{\Omega} \mathbf{B}\|_F^2 \\
     & = \|\boldsymbol{\Sigma} -\boldsymbol{\Sigma}(\mathbf{V}\boldsymbol{\Omega})(\mathbf{B}\mathbf{V}^T)\|_F^2 \nonumber
\end{align*}

Noting that $\mathbf{V}\boldsymbol{\Omega} \in\mathbb{R}^{m \times k}$ is also i.i.d. standard Gaussian and $\mathrm{rank}(\mathbf{V})=m$, we redefine $\boldsymbol{\Omega} := \mathbf{V}\boldsymbol{\Omega} \in \mathbb{R}^{m \times k}$, $\mathbf{B} := \mathbf{B}\mathbf{V}^T$ and arrive at the following: 
\begin{align*}
    \mathbb{E}_{\boldsymbol{\Omega} \in \mathbb{R}^{m \times k}} \min_{\mathbf{B} \in \mathbb{R}^{k \times m}} \|\boldsymbol{\Sigma} - \boldsymbol{\Sigma}\boldsymbol{\Omega} \mathbf{B}\|_F^2
\end{align*}

Introducing the Fenchel dual variable $\mathbf{C} \in \mathbb{R}^{m \times m}$ for the quadratic loss $\|\cdot\|_F^2$ we obtain:
\begin{align}\label{eq: fen_dual}
    &\mathbb{E}_{\boldsymbol{\Omega} \in \mathbb{R}^{m \times k}} \\ &\min_{\mathbf{B} \in \mathbb{R}^{k \times m}} \max_{\mathbf{C} \in \mathbb{R}^{m \times m}} \mathrm{tr}\left(\mathbf{C}^T\left(\boldsymbol{\Sigma} - \boldsymbol{\Sigma}\boldsymbol{\Omega} \mathbf{B}\right)\right) - \frac{\|\mathbf{C}\|_F^2}{4} \nonumber
\end{align}

Denote the optimization variables
\begin{align*}
    \mathbf{C} &= (\mathbf{C}_1, \dots, \mathbf{C}_m), \quad \mathbf{C}_i \in \mathbb{R}^m\\ \mathbf{B} &= (\mathbf{B}_1, \dots, \mathbf{B}_m), \quad  \mathbf{B}_i \in \mathbb{R}^k
\end{align*}

Expression \eqref{eq: fen_dual} decomposes as:
\begin{align*}
    &\sum_{i=1}^{m} \mathbb{E}_{\boldsymbol{\Omega} \in \mathbb{R}^{m \times k}} \min_{\mathbf{B}_i \in \calS_{\bB}} \max_{\mathbf{C}_i \in \mathbb{R}^{m}} \mathbf{C}_i^T\left(\sigma_i\mathbf{e}_i - \boldsymbol{\Sigma}\boldsymbol{\Omega} \mathbf{B}_i\right) - \frac{\|\mathbf{C}_i\|^2}{4} \nonumber
\end{align*}

Define the constraint sets
\begin{align*}
    & \calS_{\bB} := \Bigl\{\bB_i: \|\bB_i\| \le \frac{2}{\sqrt{d} - \sqrt{k}}\frac{\sigma_1(\bA)}{\sigma_m(\bA)} \Bigr\} \\
    & \calS_{\bC} := \{\bC_i: \|\bC_i\| \le \sigma_i \}
\end{align*}   
Consider the following pair of optimizations:
\begin{align*}
    & \Phi_i(\boldsymbol{\Omega}) :=  \max_{\mathbf{C}_i \in \calS_{\bC}} \min_{\mathbf{B}_i \in \calS_{\bB}} \mathbf{C}_i^T \boldsymbol{\Omega} \mathbf{B}_i + \sigma_i\mathbf{C}_{ii}- \frac{\|\mathbf{C}_i\|^2}{4}
\end{align*}
\begin{align}
    \phi_i(\mathbf{g}, \mathbf{h}) :=  \max_{\mathbf{C}_i \in \calS_{\bC}} \min_{\mathbf{B}_i \in \calS_{\bB}} & \mathbf{C}_i^T \boldsymbol{\Sigma} \mathbf{g} \|\mathbf{B}_i\|   + \mathbf{B}_i^T \mathbf{h}\|\boldsymbol{\Sigma}\mathbf{C}_i\| \nonumber \\
    & + \sigma_i\mathbf{C}_{ii}- \frac{\|\mathbf{C}_i\|^2}{4} \label{eq: PO_AO}
\end{align}
We further define $\phi^{HW}_i(\mathbf{h}) := \frac{\sigma_i^2}{1 +\theta_{\bh}  \sigma_i^2}$ and $ c_i := \frac{\sigma_i^2}{1 +\theta  \sigma_i^2}$ for $i \in [m]$ where $\theta_\bh$ and $\theta$ are defined via
\begin{align*}
    \sum_{j=1}^m \frac{1}{1 + \theta_{\bh} \sigma_j^2} &= m - \|\mathbf{h}\|^2 \\
    \sum_{j=1}^m \frac{1}{1 + \theta \sigma_j^2} &= m - k
\end{align*}

Going back to \eqref{eq: fen_dual}, we observe that it can be written as
\begin{align*}
& \mathbb{E}_{\boldsymbol{\Omega} \in \mathbb{R}^{m \times k}} \min_{\mathbf{B} \in \mathbb{R}^{k \times m}} \|\boldsymbol{\Sigma} - \boldsymbol{\Sigma}\boldsymbol{\Omega} \mathbf{B}\|_F^2 = \\
& \sum_{i=1}^m \bbE \max_{\mathbf{C}_i \in \mathbb{R}^m} \min_{\mathbf{B}_i \in \mathbb{R}^k} \mathbf{C}_i^T \boldsymbol{\Omega} \mathbf{B}_i + \sigma_i\mathbf{C}_{ii}- \frac{\|\mathbf{C}_i\|^2}{4}
\end{align*}

From Lemma \ref{lm: solution_norm_bounds},
 \begin{align*}
 & \bbE_{\boldsymbol{\Omega} \in \mathbb{R}^{m \times k}}  \max_{\mathbf{C}_i \in \mathbb{R}^m} \min_{\mathbf{C}_i \in \mathbb{R}^k} \mathbf{C}_i^T \boldsymbol{\Omega} \mathbf{B}_i + \sigma_i\mathbf{C}_{ii}- \frac{\|\mathbf{C}_i\|^2}{4} \\
 & \sim \mathbb{E}_{\boldsymbol{\Omega} \in \mathbb{R}^{m \times k}} \Phi_i(\boldsymbol{\Omega}) 
 \end{align*}
Indeed, according to Lemma \ref{lm: solution_norm_bounds}, the optimal $\bC_i$ and $\bB_i$ belong in the sets $\calS_{\bC}$ and $\calS_{\bB}$ with probability approaching $1$ as $m \to \infty$ and the objective is bounded outside of these sets according to the same lemma. In Lemma \ref{lemma: po_ao_comparison} located in the Appendix, we prove that 
\begin{align*}
\left|\mathbb{E}_{\boldsymbol{\Omega} \in \mathbb{R}^{m \times k}} \Phi_i(\boldsymbol{\Omega}) - c_i\right| \le 2\left| \mathbb{E} \phi_i(\mathbf{g},\mathbf{h}) - c_i\right|
\end{align*}

We then proceed to apply the triangle inequality:
\begin{align*}
2\left|\mathbb{E} \phi_i(\mathbf{g},\mathbf{h}) - c_i\right|  \le & 2 |\mathbb{E} \phi_i(\mathbf{g},\mathbf{h}) - \mathbb{E}\phi^{HW}_i(\mathbf{h})| \nonumber\\
&\quad + 2\left| \mathbb{E} \phi^{HW}_i(\mathbf{h}) - c_i\right| \nonumber
\end{align*}

After that, we prove that $\mathbb{E} \phi_i(\mathbf{g},\mathbf{h}) \sim \mathbb{E} \phi^{HW}_i(\mathbf{h})$ and $ \mathbb{E} \phi^{HW}_i(\mathbf{h}) \sim c_i$ in the Appendix, in Lemma \ref{lm: HW_bound} and Lemma \ref{lm: phi_hw_exp} respectively. Hence, it suffices to prove that $ \sum_{i=1}^m c_i = \tTheta$. The latter is a straightforward algebra verification:
\begin{align*}
    & \sum_{i=1}^m c_i = \frac{\sigma_i^2}{1 +\theta  \sigma_i^2} = \frac{1}{\theta}\sum_{i=1}^m \left(1 - \frac{1}{1 +\theta  \sigma_i^2}\right) = \frac{k}{\theta}
\end{align*}

It only remains to check that $\tTheta = \frac{k}{\theta}$, which is true since, by definition of $\theta$,
\begin{align*}
    \sum_{i=1}^m \frac{\sigma_i^2}{\frac{k}{\theta} + k\sigma_i^2 } = \frac{1}{k} \sum_{i=1}^m \frac{\theta\sigma_i^2}{1 + \theta\sigma_i^2} =\frac{1}{k} \sum_{i=1}^m \left(1 - \frac{1}{1 + \theta\sigma_i^2}\right) = 1 
\end{align*}

\end{proof}

\section{Numerical simulations}

We sample matrices with power-law and bilevel spectra and plot the error of the RSVD (Algorithm \ref{alg:randomized_approx}) and the RSVD with a polynomial filter corresponding to the power iteration method (Algorithm \ref{alg:randomized_approx_filters}) with $\bA \mapsto (\bA\bA^T)^q \bA$, against the value of $k$ along with the corresponding theoretical prediction provided in Corollary \ref{cor: ensemble}. As a baseline, we also plotted the fundamental lower bound and also the upper bound from \ref{prop: upp_bnd} to compare the performance of various polynomial filters and the best achievable error:
\begin{align*}
   \sum_{j = k + 1}^m \sigma_j^2 \le  \tTheta \le C \sum_{j = r + 1}^m \sigma_j^2
\end{align*}
The results and the details of the ensembles are presented in Figures \ref{subfig:bilvl}, \ref{subfig:pwrlaw}. We observe a close match between the theoretical predictions and the empirical errors for large enough values of $k$. Moreover, even though, technically, the results hold as $k \to \infty$, we observe that $k \approx 15$ already suffices for $n,d \approx 1000$ and even the gap for the values of $k < 15$ appears to be rather modest. Furthermore, the comparison with the upper bound provided by Proposition \ref{prop: upp_bnd}, a tighter bound compared to that of \cite{halko2011finding, tropp2023randomized} demonstrates the gap between the precise values and the upper bound. In fact for the bilevel case, the values for the upper bound are as follows: [2919.42, 1091.47, 829.08, 723.24,  665.57, 628.98, 603.48, 584.53], which are substantially larger than the values provided in Fig. \ref{subfig:bilvl}. Additionally, we observe that as predicted by Corollary \ref{cor: ensemble}, increasing the degree of the polynomial suffices to improve the approximation error and for the power law ensemble, remarkably setting $q=4$ provides an error sufficiently close to the fundamental lower bound, while for the bilevel ensemble, the gain in performance is modest.


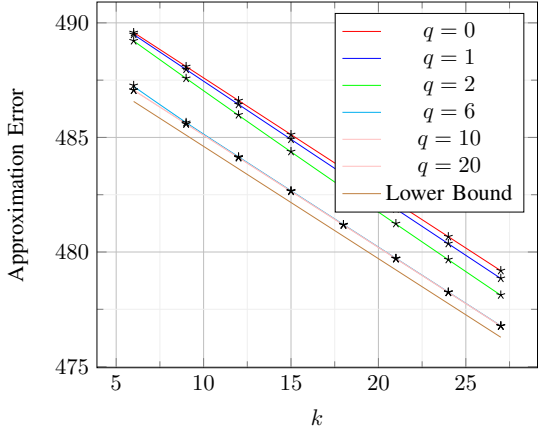
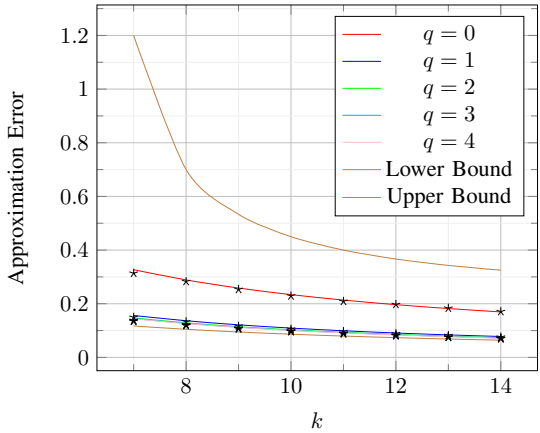
\begin{figure}[!h] 
    \centering
    \subfloat [Bilevel with $n=d=1000$, $r=50$, $a=1$, $b=0.7$] {
    \label{subfig:bilvl}
    \resizebox{0.4\textwidth}{!}{%
        \begin{tikzpicture}
        \begin{axis}[
          xlabel={$k$},
          ylabel=Approximation Error,
          legend pos = north east,
          grid = both,
            minor tick num = 1,
            major grid style = {lightgray},
            minor grid style = {lightgray!25},
          ]
        \addplot[mark = star, black, only marks,forget plot] 
        table[ y=q0, x=k]{bilvl_r=50_R=100_n=d=1000_a=1_b=07.dat};
        \addplot[mark = star, black, only marks, forget plot] 
        table[ y=q1, x=k]{bilvl_r=50_R=100_n=d=1000_a=1_b=07.dat};
        \addplot[mark = star, black, only marks, forget plot] 
        table[ y=q2, x=k]{bilvl_r=50_R=100_n=d=1000_a=1_b=07.dat};
        \addplot[mark = star, black, only marks, forget plot] 
        table[ y=q3, x=k]{bilvl_r=50_R=100_n=d=1000_a=1_b=07.dat};
        \addplot[mark = star, black, only marks, forget plot] 
        table[ y=q4, x=k]{bilvl_r=50_R=100_n=d=1000_a=1_b=07.dat};
        \addplot[mark = star, black, only marks, forget plot] 
        table[ y=q5, x=k]{bilvl_r=50_R=100_n=d=1000_a=1_b=07.dat};
        
        \addplot[smooth, thin, red] 
        table[ y=qpred0, x=k]{bilvl_r=50_R=100_n=d=1000_a=1_b=07.dat};
        \addlegendentry{$q=0$}
        \addplot[smooth, thin, blue] 
        table[ y=qpred1, x=k]{bilvl_r=50_R=100_n=d=1000_a=1_b=07.dat};
        \addlegendentry{$q=1$}
        \addplot[smooth, thin, green] 
        table[ y=qpred2, x=k]{bilvl_r=50_R=100_n=d=1000_a=1_b=07.dat};
        \addlegendentry{$q=2$}
        \addplot[smooth, thin, cyan] 
        table[ y=qpred3, x=k]{bilvl_r=50_R=100_n=d=1000_a=1_b=07.dat};
        \addlegendentry{$q=6$}
        \addplot[smooth, thin, pink] 
        table[ y=qpred4, x=k]{bilvl_r=50_R=100_n=d=1000_a=1_b=07.dat};
        \addlegendentry{$q=10$}
        \addplot[smooth, thin, pink] 
        table[ y=qpred5, x=k]{bilvl_r=50_R=100_n=d=1000_a=1_b=07.dat};
        \addlegendentry{$q=20$}
        \addplot[smooth, thin, brown] 
        table[ y=lwbnd, x=k]{bilvl_r=50_R=100_n=d=1000_a=1_b=07.dat};
        \addlegendentry{Lower Bound}
        \end{axis}
    \end{tikzpicture}   
        } 
    }\\
   \subfloat [Power law with $\alpha = 1$, $d=1000$] {
    \label{subfig:pwrlaw}
    \resizebox{0.4\textwidth}{!}{%
        \begin{tikzpicture}
        \begin{axis}[
          xlabel={$k$},
          ylabel=Approximation Error,
          legend pos = north east,
          grid = both,
            minor tick num = 1,
            major grid style = {lightgray},
            minor grid style = {lightgray!25},
          ]
        \addplot[mark = star, black, only marks,forget plot] 
        table[ y=q0, x=k]{pwrlaw_alpha=1_d=1000_r=5_R=15_a=1_b=07.dat};
        \addplot[mark = star, black, only marks, forget plot] 
        table[ y=q1, x=k]{pwrlaw_alpha=1_d=1000_r=5_R=15_a=1_b=07.dat};
        \addplot[mark = star, black, only marks, forget plot] 
        table[ y=q2, x=k]{pwrlaw_alpha=1_d=1000_r=5_R=15_a=1_b=07.dat};
        \addplot[mark = star, black, only marks, forget plot] 
        table[ y=q3, x=k]{pwrlaw_alpha=1_d=1000_r=5_R=15_a=1_b=07.dat};
        \addplot[mark = star, black, only marks, forget plot] 
        table[ y=q4, x=k]{pwrlaw_alpha=1_d=1000_r=5_R=15_a=1_b=07.dat};
        
        \addplot[smooth, thin, red] 
        table[ y=qpred0, x=k]{pwrlaw_alpha=1_d=1000_r=5_R=15_a=1_b=07.dat};
        \addlegendentry{$q=0$}
        \addplot[smooth, thin, blue] 
        table[ y=qpred1, x=k]{pwrlaw_alpha=1_d=1000_r=5_R=15_a=1_b=07.dat};
        \addlegendentry{$q=1$}
        \addplot[smooth, thin, green] 
        table[ y=qpred2, x=k]{pwrlaw_alpha=1_d=1000_r=5_R=15_a=1_b=07.dat};
        \addlegendentry{$q=2$}
        \addplot[smooth, thin, cyan] 
        table[ y=qpred3, x=k]{pwrlaw_alpha=1_d=1000_r=5_R=15_a=1_b=07.dat};
        \addlegendentry{$q=3$}
        \addplot[smooth, thin, pink] 
        table[ y=qpred4, x=k]{pwrlaw_alpha=1_d=1000_r=5_R=15_a=1_b=07.dat};
        \addlegendentry{$q=4$}
        \addplot[smooth, thin, brown] 
        table[ y=lwbnd, x=k]{pwrlaw_alpha=1_d=1000_r=5_R=15_a=1_b=07.dat};
        \addlegendentry{Lower Bound}
        \addplot[smooth, thin, brown] 
        table[ y=upbnd, x=k]{pwrlaw_alpha=1_d=1000_r=5_R=15_a=1_b=07.dat};
        \addlegendentry{Upper Bound}
        \end{axis}
    \end{tikzpicture}   
        } 
    }
    \caption{Approximation Error for two ensembles of choice}

\end{figure}

\section{Conclusion and Future Work}
We have derived exact asymptotic characterizations for RSVD performance that capture the full spectral structure of the data matrix and naturally extend to polynomial-filtered variants. Our results complement existing literature by bridging the gap between worst-case guarantees and actual performance: while prior work establishes what can be guaranteed in general, our analysis reveals what actually occurs for specific problem instances in the large-scale limit.
Our approach opens several avenues for future research. The framework developed here can be extended to investigate other properties of RSVD approximation error, including finite-sample corrections to our asymptotic results and the universality of approximation error across different choices of sketching matrices $\bOmega$. These directions would further enhance our understanding of RSVD behavior beyond the asymptotic regime and across diverse randomization strategies.


\section{Acknowledgments}

The authors are extremely grateful to Joel Tropp for sharing insights that lead to this paper. 

\section{Appendix}
\subsection{Technical Lemmata}
\begin{proof}[Proof of Lemma \ref{lm: the_obj}]Let $\mathbf{Y} = \chi(\mathbf{A})\boldsymbol{\Omega}$. 
The optimization problem is a least squares problem. The optimal solution is:
\begin{align*}
   \mathbf{B}^* = (\mathbf{Y}^T\mathbf{Y})^{-1}\mathbf{Y}^T\mathbf{A} = \mathbf{Y}^{\dagger}\mathbf{A} \nonumber
\end{align*}

Using the QR decomposition:
\begin{align*}
   \mathbf{Y}^{\dagger} &= (\mathbf{Q}\mathbf{R})^{\dagger} = \mathbf{R}^{-1}\mathbf{Q}^T
\end{align*}

Therefore:
\begin{align*}
   \mathbf{B}^* &= \mathbf{R}^{-1}\mathbf{Q}^T\mathbf{A}
\end{align*}

The approximation given by this optimal $\mathbf{B}^*$ is:
\begin{align*}
   & \hat{\mathbf{A}} = \mathbf{Y}\mathbf{B}^* 
   = \mathbf{Q}\mathbf{R} \mathbf{R}^{-1}\mathbf{Q}^T\mathbf{A} = \mathbf{Q}\mathbf{Q}^T\mathbf{A} \nonumber
\end{align*}

\end{proof}

For all subsequent lemmas we operate under the notation and assumptions of Theorem \ref{thm: err} and Theorem \ref{thm: err_filt}. 

\begin{lemma}\label{lm: solution_norm_bounds}

Let $\mathbf{B}_i$ and $\mathbf{C}_i$ be the solutions to the optimization problem:
$$\min_{\mathbf{B}_i} \|\boldsymbol{\Sigma}_i - \boldsymbol{\Sigma} \boldsymbol{\Omega} \mathbf{B}_i \|^2$$

Then, 

1. With probability at least $1 - e^{-\frac{(\sqrt{d}-\sqrt{k})^2}{8}}$, it holds that 
\begin{align}
    \|\bB^*\| \le \frac{2}{\sqrt{d} - \sqrt{k}}\kappa(\bA) 
\end{align}
where $\kappa(\bA) = \frac{\|\bA\|_2}{\sigma_{\min}(\bA)}$ is the condition number of $\bA$.

2. The following always holds:
   $$\|\mathbf{C}_i\|_2 \leq \sigma_i$$

\end{lemma}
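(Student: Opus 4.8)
The plan is to treat the two parts separately, since each concerns a different object attached to the decoupled per-column least-squares problems $\min_{\mathbf{B}_i}\|\sigma_i\mathbf{e}_i-\boldsymbol{\Sigma}\boldsymbol{\Omega}\mathbf{B}_i\|^2$ (with $\boldsymbol{\Sigma}_i=\sigma_i\mathbf{e}_i$ the $i$-th column of $\boldsymbol{\Sigma}$) that arise in the proof of Theorem~\ref{thm: err}: Part~1 bounds the optimal primal variable $\mathbf{B}_i^\star$ and needs exactly one probabilistic input, whereas Part~2 bounds the optimal Fenchel dual variable $\mathbf{C}_i^\star$ and is purely deterministic.

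For Part~1: since $\boldsymbol{\Sigma}\boldsymbol{\Omega}$ has full column rank $k$ almost surely, the minimizer is $\mathbf{B}_i^\star=(\boldsymbol{\Sigma}\boldsymbol{\Omega})^{\dagger}\sigma_i\mathbf{e}_i$, so
\begin{align*}
\|\mathbf{B}_i^\star\|_2 \;\le\; \|(\boldsymbol{\Sigma}\boldsymbol{\Omega})^{\dagger}\|_2\,\sigma_i \;\le\; \frac{\sigma_1(\bA)}{\sigma_{\min}(\boldsymbol{\Sigma}\boldsymbol{\Omega})},
\end{align*}
using $\sigma_i\le\sigma_1(\bA)=\|\bA\|_2$. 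I would then factor the bottom singular value multiplicatively, $\sigma_{\min}(\boldsymbol{\Sigma}\boldsymbol{\Omega})\ge\sigma_{\min}(\boldsymbol{\Sigma})\,\sigma_{\min}(\boldsymbol{\Omega})=\sigma_{\min}(\bA)\,\sigma_{\min}(\boldsymbol{\Omega})$, which reduces the whole claim to a lower bound on $\sigma_{\min}(\boldsymbol{\Omega})$ for the standard Gaussian sketch. For that I would invoke Gordon's theorem, $\mathbb{E}\,\sigma_{\min}(\boldsymbol{\Omega})\ge\sqrt{d}-\sqrt{k}$, together with the $1$-Lipschitzness of $\boldsymbol{\Omega}\mapsto\sigma_{\min}(\boldsymbol{\Omega})$, so that Gaussian concentration gives $\mathbb{P}\!\left(\sigma_{\min}(\boldsymbol{\Omega})\le\sqrt{d}-\sqrt{k}-t\right)\le e^{-t^2/2}$; taking $t=(\sqrt{d}-\sqrt{k})/2$ makes $\sigma_{\min}(\boldsymbol{\Omega})\ge(\sqrt{d}-\sqrt{k})/2$ hold with probability at least $1-e^{-(\sqrt{d}-\sqrt{k})^2/8}$, and substituting back yields $\|\mathbf{B}_i^\star\|_2\le\frac{2}{\sqrt{d}-\sqrt{k}}\,\kappa(\bA)$ on that event.

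For Part~2: I would use the Fenchel identity $\|\mathbf{w}\|^2=\max_{\mathbf{C}_i}\;\mathbf{C}_i^{T}\mathbf{w}-\tfrac14\|\mathbf{C}_i\|^2$ underlying \eqref{eq: fen_dual}, with $\mathbf{w}=\sigma_i\mathbf{e}_i-\boldsymbol{\Sigma}\boldsymbol{\Omega}\mathbf{B}_i$. For each fixed $\mathbf{B}_i$ the maximizer over $\mathbf{C}_i$ is a fixed positive multiple of $\mathbf{w}$, hence at the joint optimum $\mathbf{C}_i^\star$ is proportional to the least-squares residual $\mathbf{r}_i:=\sigma_i\mathbf{e}_i-\boldsymbol{\Sigma}\boldsymbol{\Omega}\mathbf{B}_i^\star$. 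I would bound this residual by comparing with the feasible choice $\mathbf{B}_i=0$, which attains objective $\|\sigma_i\mathbf{e}_i\|^2=\sigma_i^2$; optimality then forces $\|\mathbf{r}_i\|_2\le\sigma_i$ (equivalently, $\|\mathbf{r}_i\|_2^2=\langle\mathbf{r}_i,\sigma_i\mathbf{e}_i\rangle$ since $\mathbf{r}_i\perp\operatorname{range}(\boldsymbol{\Sigma}\boldsymbol{\Omega})$, and Cauchy--Schwarz closes it). This is the claimed deterministic control $\|\mathbf{C}_i^\star\|_2\le\sigma_i$, up to the normalization constant of the quadratic conjugate.

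I do not anticipate a genuine obstacle here: these are two ``housekeeping'' estimates, and the only delicate point is the explicit constant in Part~1 — one has to match the non-asymptotic least-singular-value tail precisely so that the failure probability comes out as exactly $e^{-(\sqrt{d}-\sqrt{k})^2/8}$ and the radius of $\calS_{\bB}$ as $\frac{2}{\sqrt{d}-\sqrt{k}}\kappa(\bA)$. I would also emphasize at the end that the sole purpose of the lemma is to trap $\mathbf{B}_i^\star$ and $\mathbf{C}_i^\star$ inside the compact sets $\calS_{\bB}$ and $\calS_{\bC}$ with probability $1-o(1)$ and to bound the objective off those sets, which is what licenses running the Gaussian comparison argument of Lemma~\ref{lemma: po_ao_comparison} over a bounded domain.
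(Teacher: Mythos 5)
Your proposal follows essentially the same route as the paper: for Part~1 the paper writes $\bB^*=\bR^{-1}\bQ^T\bA$, bounds $\|\bR^{-1}\|_2 = 1/\sigma_{\min}(\bA\bOmega)\le 1/(\sigma_{\min}(\bA)\sigma_{\min}(\bOmega))$, and invokes ``standard RMT results,'' which is precisely the pseudoinverse submultiplicativity plus the Gordon/Lipschitz-concentration step with $t=(\sqrt d-\sqrt k)/2$ that you spell out; for Part~2 the paper's entire proof is ``plug in the feasible solution $\bB_i=0$,'' which is the residual comparison you carried out in detail. One point you noticed but only hedged on (``up to the normalization constant'') is actually a small defect in the lemma as stated: with the conjugate written as $\bC_i^T\bw-\tfrac14\|\bC_i\|^2$, the maximizing $\bC_i^*$ is $2\bw^*=2\br_i$, so the feasibility of $\bB_i=0$ gives $\|\bC_i^*\|\le 2\sigma_i$ rather than $\sigma_i$, and the radius of $\calS_{\bC}$ should carry that factor of two — harmless downstream, where only containment in some fixed compact set matters, but worth stating rather than sweeping into ``up to a constant.''
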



\begin{proof}
1. From Lemma \ref{lm: the_obj}, we have $\bB^* = \bR^{-1}\bQ^T\bA$. Using the submultiplicativity property:
\begin{align*}
     \|\bB^*\|_{2} = \|\bR^{-1}\bQ^T\bA\|_{2} \leq \|\bR^{-1}\|_{2} \|\bQ^T\bA\|_{2} \leq \|\bR^{-1}\|_2 \|\bA\|_2 
\end{align*}
Applying standard RMT results to $\sigma_{\min}(\bOmega)$ we are done since:
$$\|\bR^{-1}\|_2 = \frac{1}{\sigma_{\min}(\bR)} = \frac{1}{\sigma_{\min}(\bA\bOmega)} \le \frac{1}{\sigma_{\min}(\bA)}\frac{1}{\sigma_{\min}(\bOmega)}$$

2. Plug in the feasible solution $\mathbf{B}_i = 0$.

\end{proof}
Then we obtain:
\begin{lemma}\label{lemma: po_ao_comparison}
Let $ \gamma \sim \mathcal{N}(0,1)$ and
\begin{align*}
    \Psi_i(\bOmega, \gamma) := \max_{\mathbf{C}_i \in \calS_{\bC}} \min_{\mathbf{C}_i \in \calS_{\bB}} & \mathbf{C}_i^T \boldsymbol{\Omega} \mathbf{B}_i + \sigma_i\mathbf{C}_{ii} \\
    & - \frac{\|\mathbf{C}_i\|^2}{4} + \gamma\|\bC_i\| 
\end{align*}

The following inequalities hold for any $c, t \in \mathbb{R}$:
\begin{align*}
1. \quad \mathbb{P} \left(| \Phi_i(\bOmega) - c| > t \right) \le 2 \mathbb{P}\left(|\Psi_i(\bOmega, \gamma) - c| > t \right)        
\end{align*}
 \begin{align*} 2. \quad
\mathbb{P}\left(|\Psi_i(\bOmega, \gamma) - c| > t \right) \le \mathbb{P}\left(|\phi_i(\mathbf{g}, \mathbf{h}) - c| > t \right)        
\end{align*}
\end{lemma}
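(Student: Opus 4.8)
This lemma is the instance-specific packaging of Gordon's Gaussian min-max comparison machinery \cite{Gordon1985SomeIF, Gordon88}, so the plan is to establish the two inequalities separately: the first is a de-randomization step that costs the factor $2$, and the second is Gordon's comparison proper, which is lossless. For \textbf{Inequality 1}, I would first observe that the extra term $\gamma\|\bC_i\|$ in $\Psi_i$ does not involve the inner variable $\bB_i$, so that $\Psi_i(\bOmega,\gamma)=\max_{\bC_i\in\calS_{\bC}}\bigl(g_i(\bC_i;\bOmega)+\gamma\|\bC_i\|\bigr)$ and $\Phi_i(\bOmega)=\max_{\bC_i\in\calS_{\bC}}g_i(\bC_i;\bOmega)$, where $g_i(\bC_i;\bOmega):=\min_{\bB_i\in\calS_{\bB}}\bigl(\bC_i^T\bSigma\bOmega\bB_i+\sigma_i\bC_{ii}-\tfrac14\|\bC_i\|^2\bigr)$. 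Since $\|\bC_i\|\ge 0$, this yields the pointwise bounds $\Psi_i\ge\Phi_i$ on $\{\gamma\ge 0\}$ and $\Psi_i\le\Phi_i$ on $\{\gamma\le 0\}$. Because $\gamma\sim\mathcal{N}(0,1)$ is independent of $\bOmega$ and $\mathbb{P}(\gamma\ge 0)=\mathbb{P}(\gamma\le 0)=\tfrac12$, intersecting $\{\Phi_i>c+t\}$ with $\{\gamma\ge 0\}$ and $\{\Phi_i<c-t\}$ with $\{\gamma\le 0\}$ and summing the two tail bounds gives $\mathbb{P}(|\Phi_i-c|>t)\le 2\,\mathbb{P}(|\Psi_i-c|>t)$.

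For \textbf{Inequality 2} I would appeal to Gordon's comparison. View the Gaussian bilinear term $\bC_i^T\bSigma\bOmega\bB_i$ together with the (appropriately weighted) independent scalar $\gamma$ as a centered Gaussian process indexed by $(\bB_i,\bC_i)\in\calS_{\bB}\times\calS_{\bC}$, and the pair of linear forms $\bC_i^T\bSigma\bg\,\|\bB_i\|+\bB_i^T\bh\,\|\bSigma\bC_i\|$ appearing in $\phi_i$ as Gordon's auxiliary process. The role of the $\gamma$-term is precisely to equalize the total variances of the two processes; with that normalization one checks that the two processes have identical variance and identical covariance for a fixed $\bB_i$, while across distinct $\bB_i$'s the first process dominates, the gap being $\bigl(\langle\bSigma\bC_i,\bSigma\bC_i'\rangle-\|\bSigma\bC_i\|\,\|\bSigma\bC_i'\|\bigr)\bigl(\langle\bB_i,\bB_i'\rangle-\|\bB_i\|\,\|\bB_i'\|\bigr)\ge 0$ by Cauchy--Schwarz in both directions. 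Gordon's inequality then yields $\mathbb{P}(\Psi_i<\lambda)\le\mathbb{P}(\phi_i<\lambda)$ for every $\lambda$: to line up the min-max orders I would use Sion's theorem to rewrite the $\max_{\bC_i}\min_{\bB_i}$ in $\Psi_i$ as $\min_{\bB_i}\max_{\bC_i}$ (the objective is affine in $\bB_i$, concave in $\bC_i$, and $\calS_{\bB},\calS_{\bC}$ are convex and compact) and then use weak duality $\max_{\bC_i}\min_{\bB_i}\le\min_{\bB_i}\max_{\bC_i}$ on the $\phi_i$ side. For the matching upper tail I would apply the same comparison to the negated processes with the roles of $\bB_i$ and $\bC_i$ exchanged; since a $\max_{\bC_i}\min_{\bB_i}$ equals minus the $\min_{\bC_i}\max_{\bB_i}$ of the negated objective, this requires no convexity and produces $\mathbb{P}(\Psi_i>\lambda)\le\mathbb{P}(\phi_i>\lambda)$. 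Adding the two tails gives $\mathbb{P}(|\Psi_i-c|>t)\le\mathbb{P}(|\phi_i-c|>t)$.

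The conceptual steps are short; the main obstacle is the bookkeeping that keeps the variance equalization and the minimax interchanges mutually consistent — tracking the norm factors so the Gordon scalar carries exactly the right weight, checking that the truncated sets $\calS_{\bB},\calS_{\bC}$ from Lemma \ref{lm: solution_norm_bounds} are convex, compact, and large enough not to move the optimizers, and arranging the argument so that concavity in $\bC_i$ is only ever invoked on the primary-optimization side (the $\phi_i$ objective need not be concave in $\bC_i$ because of the term $\|\bSigma\bC_i\|\,\bh^T\bB_i$, which is why weak duality rather than Sion is used there).
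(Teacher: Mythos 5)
Your proof follows the same structure the paper uses: for the first inequality you condition on the sign of $\gamma$ and use that the $\gamma$-term has a definite sign to sandwich $\Phi_i$ by $\Psi_i$ at the cost of a factor $2$, which is exactly the paper's argument; for the second inequality the paper simply cites Gordon's min-max theorem via \cite{thrampoulidis2014gaussian} and \cite{akhtiamov2024novel}, and your sketch of the covariance checks, the Sion/weak-duality bookkeeping, and the negate-and-swap trick for the upper tail is the content those references supply. So the approach is the same; you have spelled out what the paper delegates.

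One detail worth flagging, since you yourself hedge with ``(appropriately weighted)'': as literally written, the scalar term in $\Psi_i$ is $\gamma\|\bC_i\|$, which is independent of $\bB_i$ (that is what lets you pull it out of the inner $\min$), but it does \emph{not} equalize $\mathrm{Var}(\bC_i^T\bSigma\bOmega\bB_i)+\gamma$-variance with $\mathrm{Var}(\bC_i^T\bSigma\bg\|\bB_i\|+\bB_i^T\bh\|\bSigma\bC_i\|)$. For Gordon's comparison the weight needs to be $\gamma\|\bSigma\bC_i\|\,\|\bB_i\|$; with that weight the first inequality still holds (the sign argument only needs the coefficient of $\gamma$ to be nonnegative, not $\bB_i$-free, though then one cannot pull the term outside the inner $\min$ — one just compares inner minima pointwise), and the Gordon covariance conditions are then met. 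So the conceptual plan is right, but the ``pull out of $\min_{\bB_i}$'' step and the variance-equalization step are in tension and should be reconciled by fixing the $\gamma$-weight first.
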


\begin{proof}
\begin{align}\label{eq: total_prob}
1. \quad \mathbb{P} \left(| \Phi_i(\bOmega) - c| > t \right) &= \mathbb{P} \left( \Phi_i(\bOmega)  > c + t \right)\nonumber \\
&\quad + \mathbb{P} \left(\Phi_i(\bOmega) < c - t \right) \nonumber \\
\mathbb{P}\left(|\Psi_i(\bOmega, \gamma) - c| > t \right) &= \mathbb{P} \left( \Psi_i(\bOmega, \gamma)  > c + t \right) \\
&\quad + \mathbb{P} \left(\Psi_i(\bOmega, \gamma) < c - t \right) \nonumber
\end{align}
Conditioning on $\gamma > 0$ and $\gamma < 0$ respectively:
\begin{align}\label{eq: conditioning}
\mathbb{P} \left( \Phi_i(\bOmega)  > c + t \right) &\leq  2 \mathbb{P}\left(\Psi_i(\bOmega, \gamma) > c + t \right) \\
\mathbb{P} \left(\Phi_i(\bOmega) < c - t \right) &\leq  2 \mathbb{P}\left(\Psi_i(\bOmega, \gamma) < c - t \right) \nonumber
\end{align}

Hence, we are done by combining equations \eqref{eq: conditioning} and \eqref{eq: total_prob}. 

2. This follows from Gordon's Min-Max theorem applied to the objective and to its convex dual. The details are identical to those in the proof of Theorem II.I in \cite{thrampoulidis2014gaussian} and the proof of Lemma 1 in \cite{akhtiamov2024novel}. 
\end{proof}

\begin{lemma}\label{lm: HW_bound}
$$\mathbb{E} \phi_i(\mathbf{g}, \mathbf{h})   \sim \bbE \phi^{HW}_i(\mathbf{h}) $$
\end{lemma}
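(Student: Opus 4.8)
\textbf{Proof plan for Lemma~\ref{lm: HW_bound}.} The goal is to show that the expectation of the Gaussian min-max objective $\phi_i(\bg,\bh)$ — which still contains randomness in the scalar-coupled Gaussian vector $\bg$ as well as $\bh$ — is asymptotically equivalent to $\bbE\,\phi_i^{HW}(\bh) = \bbE\,\frac{\sigma_i^2}{1+\theta_\bh\sigma_i^2}$, where $\theta_\bh$ is pinned down by $\sum_j (1+\theta_\bh\sigma_j^2)^{-1} = m - \|\bh\|^2$. The natural strategy is to first solve the inner min-max in $\phi_i(\bg,\bh)$ explicitly for fixed realizations of $\bg,\bh$, reducing it to a deterministic scalar optimization, and then invoke concentration of the relevant quadratic forms.

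\textbf{Step 1: Close-form reduction of $\phi_i(\bg,\bh)$.} Starting from \eqref{eq: PO_AO}, I would minimize over $\bB_i$ first: the $\bB_i$-dependent part is $\|\bB_i\|\cdot\bC_i^T\bSigma\bg + \bB_i^T\bh\|\bSigma\bC_i\|$, which (ignoring the norm constraint $\calS_\bB$, valid with high probability by Lemma~\ref{lm: solution_norm_bounds}) is minimized by aligning $\bB_i$ against $\bh$, giving value $-\big|\,\bC_i^T\bSigma\bg + \|\bh\|\,\|\bSigma\bC_i\|\,\big|$ when $\bC_i^T\bSigma\bg \le 0$, or more carefully a term of the form $\|\bSigma\bC_i\|\,\|\bh\| \cdot(\text{sign factors})$ balanced against the linear term in $\bg$. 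The remaining maximization over $\bC_i$ is then a quadratic-plus-norm problem; writing $\bC_i$ in coordinates and using the structure $\sigma_i C_{ii} - \|\bC_i\|^2/4$ together with the term proportional to $\|\bSigma\bC_i\|$, the optimal $\bC_i$ has a water-filling / resolvent form $C_{ij} \propto \sigma_i\delta_{ij}/(1 + \lambda\sigma_j^2)$ for a Lagrange multiplier $\lambda$, and the optimal value becomes $\frac{\sigma_i^2}{1+\lambda\sigma_i^2}$ with $\lambda$ determined by a scalar equation involving $\|\bg\|^2$ and $\|\bh\|^2$. This is where the $\phi^{HW}$ form emerges.

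\textbf{Step 2: Concentration.} The scalar equation fixing $\lambda$ will depend on $\bg,\bh$ only through $\|\bg\|^2$ and $\|\bh\|^2$ (and perhaps $\bg^T(\text{something})\bg$-type quadratic forms in $\bSigma$). Since $\bg\in\bbR^k$ with $k\to\infty$, $\|\bg\|^2/k \to 1$ almost surely, and similarly any quadratic form $\bg^T M \bg$ concentrates around $\tr(M)$. Substituting these limits into the scalar equation collapses $\lambda$ to the deterministic $\theta_\bh$ defined by $\sum_j(1+\theta_\bh\sigma_j^2)^{-1}=m-\|\bh\|^2$. Then $\phi_i(\bg,\bh) = \frac{\sigma_i^2}{1+\lambda\sigma_i^2}$ concentrates around $\frac{\sigma_i^2}{1+\theta_\bh\sigma_i^2}=\phi_i^{HW}(\bh)$. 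To pass from concentration in probability to the stated asymptotic equivalence of \emph{expectations}, I would use the uniform boundedness $0 \le \phi_i, \phi_i^{HW} \le \sigma_i^2$ (feasibility of $\bC_i=0$ from below, and a crude bound from above) together with the high-probability norm control of Lemma~\ref{lm: solution_norm_bounds} to dominate the exceptional event, then apply dominated convergence / a bounded-difference argument.

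\textbf{Main obstacle.} The principal difficulty is Step~1: carefully carrying out the nested min-max with the norm-nonlinearities $\|\bB_i\|$ and $\|\bSigma\bC_i\|$ present (these make the problem non-quadratic), handling the sign/absolute-value cases that arise when eliminating $\bB_i$, and verifying that the constrained problem over $\calS_\bB,\calS_\bC$ agrees with the unconstrained one on the high-probability event. A secondary subtlety is ensuring that the map $\|\bg\|^2 \mapsto \lambda$ defined implicitly by the scalar equation is continuous and that the solution stays in the regime where the derived formula is valid (i.e.\ that $\lambda>0$ and the water-filling constraint is the relevant one), so that concentration of $\|\bg\|^2$ indeed transfers to concentration of $\lambda$; this is where most of the genuine work beyond routine algebra will lie, and the details are expected to mirror closely the convex-Gaussian-min-max computations in \cite{thrampoulidis2014gaussian} and \cite{akhtiamov2024novel}.
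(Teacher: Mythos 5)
Your high-level plan matches the paper's: reduce the auxiliary objective $\phi_i(\bg,\bh)$ to a deterministic scalar saddle-point problem and then invoke concentration (the paper uses Hanson--Wright on a quadratic form in $\bg$). But your Step~1 glosses over exactly the technical bottleneck the paper has to confront, and states a claim that is not correct as written. If you simply "minimize over $\bB_i$ first, ignoring the constraint," then after picking the direction of $\bB_i$ the $\bB_i$-dependent piece becomes $\beta\bigl(\bC_i^T\bSigma\bg - \|\bh\|\,\|\bSigma\bC_i\|\bigr)$ with $\beta = \|\bB_i\|\ge 0$, and the infimum over $\beta$ is either $0$ or $-\infty$ — not the $-|\cdot|$ expression you wrote, and not a finite quantity that can be handed to a "Lagrange multiplier" step. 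The resolution in the paper is to \emph{keep} $\beta$ as a live variable, linearize the norm $\|\bSigma\bC_i\|$ via the square-root trick $\|\bSigma\bC_i\| = \min_{\tau>0}\tfrac12\bigl(\tau\|\bSigma\bC_i\|^2 + \tfrac1\tau\bigr)$, and then use Sion's minimax theorem to reorder the resulting $\min_\beta\max_\tau\max_{\bC_i}$ so that the $\bC_i$-step is a genuinely unconstrained quadratic. Without something equivalent to this linearization-plus-swap, the "quadratic-plus-norm problem" you allude to does not close.

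The second issue is that the closed form you assert at the end of Step~1 is wrong for fixed $(\bg,\bh)$, and the way it is wrong matters. After eliminating $\bC_i$, the value at fixed $\bg$ is (up to the $-\beta\|\bh\|/(2\tau)$ term) the quadratic form $\bigl(\beta\bSigma\bg + \sigma_i\be_i\bigr)^T\bigl(\bI + 2\beta\tau\|\bh\|\bSigma^2\bigr)^{-1}\bigl(\beta\bSigma\bg + \sigma_i\be_i\bigr)$, which depends on $\bg$ through the full vector, not through $\|\bg\|^2$. The resolvent form $\tfrac{\sigma_i^2}{1+\theta_\bh\sigma_i^2}$ only emerges \emph{after} Hanson--Wright replaces that quadratic form by its trace plus the $i$-th diagonal term. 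And once that concentration is carried out, the Lagrange multiplier $\theta_\bh$ ends up being fixed by $\sum_j (1+\theta_\bh\sigma_j^2)^{-1} = m - \|\bh\|^2$ — it depends on $\|\bh\|^2$ only, with no residual $\|\bg\|^2$ dependence at all. So "a scalar equation involving $\|\bg\|^2$ and $\|\bh\|^2$" misdescribes the structure: $\bg$ is eliminated entirely by concentration, not by reduction to its norm. Your Step~2 (dominated convergence to pass from in-probability convergence to equivalence of expectations, using boundedness by $\sigma_i^2$ and the event control from Lemma~\ref{lm: solution_norm_bounds}) is sound and is in fact slightly more explicit than what the paper records, but it cannot salvage a Step~1 that does not actually produce a deterministic limit of the required resolvent form.
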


\begin{proof}
We will start with analyzing the objective without the expectation first. Continuing from the formulation provided in \eqref{eq: PO_AO}. By convexity-concavity of the objective in $\Phi_i$, we can swap the min and max and write
\begin{align*}
    \max_{\mathbf{C}_i \in \calS_{\bC}} \min_{\mathbf{B}_i \in \calS_{\bB}}  \mathbf{C}_i^T \boldsymbol{\Sigma} \mathbf{g} \|\mathbf{B}_i\| + \mathbf{B}_i^T \mathbf{h} \|\boldsymbol{\Sigma}\mathbf{C}_i\| + \sigma_i\mathbf{C}_{ii}- \frac{\|\mathbf{C}_i\|^2}{4}
\end{align*}
We let $\bB_i = \beta \hat{\bB}_i$ where $\|\hat{\bB}_i\|_2 = 1$. We perform the optimization over the direction, $\hat{\bB_i}$, which yields the following equivalent optimization:
\begin{align*}
     \max_{\mathbf{C}_i \in \calS_{\bC}} \min_{\beta \ge 0} \mathbf{C}_i^T \boldsymbol{\Sigma} \mathbf{g} \beta - \beta \|\mathbf{h}\| \|\boldsymbol{\Sigma}\mathbf{C}_i\| + \sigma_i\mathbf{C}_{ii}- \frac{\|\mathbf{C}_i\|^2}{4} 
\end{align*}
Now we use the square-root trick $\sqrt{x}:= \min_{\tau \ge 0} \frac{1}{\tau} + \tau x^2$ to obtain the following optimization
\begin{align*}
     \max_{\mathbf{C}_i \in \calS_{\bC}} \min_{\beta \ge 0} \max_{\tau \ge 0} & \;\mathbf{C}_i^T \boldsymbol{\Sigma} \mathbf{g} \beta - \frac{\beta  \|\mathbf{h}\|}{2} \left(\tau\|\boldsymbol{\Sigma}\mathbf{C}_i\|^2 + \frac{1}{\tau}\right) \\ &+ \sigma_i\mathbf{C}_{ii}- \frac{\|\mathbf{C}_i\|^2}{4}
\end{align*}
Rearranging terms yields
\begin{align}
    \max_{\mathbf{C}_i \in \calS_{\bC}}  \min_{\beta \ge 0} \max_{\tau \ge 0} - \frac{\beta\|\mathbf{h}\|}{2\tau} + \mathbf{C}_i^T \left(\beta \boldsymbol{\Sigma} \mathbf{g} + \sigma_i\mathbf{e}_i\right) \nonumber \\
    \quad - \frac{\beta\tau  \|\mathbf{h}\|}{2} \|\boldsymbol{\Sigma}\mathbf{C}_i\|^2 - \frac{\|\mathbf{C}_i\|^2}{4} \label{opt: scal_not}
\end{align}

Now we observe that the objective in \eqref{opt: scal_not} is convex-concave with respect to $\beta$ and $(\tau, \bC_i)$ respectively. Hence using Sion's minimax theorem, we may swap the order of the minimization and maximizations to obtain
\begin{align*}
    \min_{\beta \ge 0} \max_{\tau \ge 0} \max_{\mathbf{C}_i \in \calS_{\bC}}  - \frac{\beta\|\mathbf{h}\|}{2\tau} + \mathbf{C}_i^T \left(\beta \boldsymbol{\Sigma} \mathbf{g} + \sigma_i\mathbf{e}_i\right) \\
    \quad - \frac{\beta\tau  \|\mathbf{h}\|}{2} \|\boldsymbol{\Sigma}\mathbf{C}_i\|^2 - \frac{\|\mathbf{C}_i\|^2}{4}
\end{align*}
Performing the quadratic optimization over $\bC_i$ yields
\begin{align}\label{eq: obj_before_HW}
& \min_{\beta \ge 0} \max_{\tau \ge 0}  - \frac{\beta\|\mathbf{h}\|}{2\tau} \nonumber \\ 
& + \Bigl(\beta \boldsymbol{\Sigma} \mathbf{g} + \sigma_i\mathbf{e}_i\Bigr)^T\Bigl(\mathbf{I} + 2 \beta\tau \|\mathbf{h}\|\boldsymbol{\Sigma}^2\Bigr)^{-1} \Bigl(\beta \boldsymbol{\Sigma} \mathbf{g} + \sigma_i\mathbf{e}_i\Bigr) 
\end{align}




As $m \to \infty$, due to the Hanson-Wright inequality, \eqref{eq: obj_before_HW} converges in probability to:
\begin{align*}
\min_{\beta \ge 0} \max_{\tau \ge 0} &- \frac{\beta\|\mathbf{h}\|}{2\tau} +\beta^2 \mathrm{tr} \left(\boldsymbol{\Sigma}^2\Bigl(\mathbf{I} + 2 \beta\tau \|\mathbf{h}\|\boldsymbol{\Sigma}^2\Bigr)^{-1}\right) \nonumber\\
&+ \frac{\sigma_i^2}{1 + 2\beta\tau\|\mathbf{h}\|\sigma_i^2} \nonumber
\end{align*}
Which can be rewritten as:
\begin{align}\label{eq: right_after_HW}
\min_{\beta \ge 0} \max_{\tau \ge 0} - \frac{\beta\|\mathbf{h}\|}{2\tau} +  \sum_j \frac{\sigma_j^2(\beta^2 + \delta_{ij})}{1 + 2\beta\tau\|\mathbf{h}\|\sigma_j^2} 
\end{align}

Setting derivatives by $\tau$ and $\beta$ to zero:
\begin{align*}
&\frac{\beta\|\mathbf{h}\|}{2\tau^2} -  \sum_j \frac{2\sigma_j^4\beta\|\mathbf{h}\|(\beta^2 + \delta_{ij})}{(1 + 2\beta\tau\|\mathbf{h}\|\sigma_j^2)^2} = 0, \\
&- \frac{\|\mathbf{h}\|}{2\tau} + \sum_j \sigma_j^2 \frac{2\beta(1 + 2\beta\tau\|\mathbf{h}\|\sigma_j^2)}{(1 + 2\beta\tau\|\mathbf{h}\|\sigma_j^2)^2} \nonumber\\
&\quad - \sum_j \sigma_j^2\frac{2\tau \|\mathbf{h}\|\sigma_j^2(\beta^2 + \delta_{ij})}{(1 + 2\beta\tau\|\mathbf{h}\|\sigma_j^2)^2} = 0 \nonumber
\end{align*}

Simplifying both equations we arrive at:
\begin{align*}
\frac{1}{2\tau^2} &=  \sum_j \frac{2\sigma_j^4(\beta^2 + \delta_{ij})}{(1 + 2\beta\tau\|\mathbf{h}\|\sigma_j^2)^2} \\
\frac{\|\mathbf{h}\|}{2\tau}  &= \sum_j \sigma_j^2 \frac{2\beta + 2\beta^2\tau\|\mathbf{h}\|\sigma_j^2 - 2\tau \|\mathbf{h}\|\sigma_j^2 \delta_{ij}}{(1 + 2\theta\|\mathbf{h}\|\sigma_j^2)^2} \nonumber
\end{align*}

Introducing $\tilde{\theta}= \beta\tau$ we get:
\begin{align*}
1 &=  \sum_j \frac{4\sigma_j^4(\tilde{\theta}^2 + \delta_{ij}\tau^2)}{(1 + 2\tilde{\theta}\|\mathbf{h}\|\sigma_j^2)^2} \\
\|\mathbf{h}\| &= \sum_j 4 \sigma_j^2 \frac{\tilde{\theta} + \tilde{\theta}^2\|\mathbf{h}\|\sigma_j^2 - \tau^2 \|\mathbf{h}\|\sigma_j^2 \delta_{ij}}{(1 + 2\tilde{\theta}\|\mathbf{h}\|\sigma_j^2)^2} \nonumber
\end{align*}

Replacing the penultimate equation above by the the sum of the last equation with the penultimate one multiplied by $\|\mathbf{h}\|$ and canceling out $\tilde{\theta}$ we arrive at:
\begin{align}\label{eq: theta}
\sum_j 2 \sigma_j^2 \frac{\tilde{\theta}}{1 + 2\tilde{\theta}\|\mathbf{h}\|\sigma_j^2} = \|\mathbf{h}\| 
\end{align}
The value of \eqref{eq: right_after_HW} simplifies using \eqref{eq: theta} as:
\begin{align*}
&- \frac{\tilde{\theta}\|\mathbf{h}\|}{2\tau^2} +  \sum_j \frac{\sigma_j^2(\beta^2 + \delta_{ij})}{1 + 2\tilde{\theta}\|\mathbf{h}\|\sigma_j^2} = \frac{\sigma_i^2}{1 + 2\tilde{\theta}\|\mathbf{h}\|\sigma_i^2}
\end{align*}
The value of \eqref{eq: right_after_HW} simplifies as follows using \eqref{eq: theta}:
\begin{align*}
&- \frac{\tilde{\theta}\|\mathbf{h}\|}{2\tau^2} +  \sum_j \frac{\sigma_j^2(\beta^2 + \delta_{ij})}{1 + 2\tilde{\theta}\|\mathbf{h}\|\sigma_j^2} = - \frac{\tilde{\theta}\|\mathbf{h}\|}{2\tau^2} +  \sum_j \frac{\sigma_j^2(\frac{\tilde{\theta}^2}{\tau^2} + \delta_{ij})}{1 + 2\tilde{\theta}\|\mathbf{h}\|\sigma_j^2 } \\
&=   \frac{\tilde{\theta}}{2\tau^2} \left( \sum_j \frac{2\tilde{\theta} \sigma_j^2}{1 + 2\tilde{\theta}\|\mathbf{h}\|\sigma_j^2 } -\|\mathbf{h}\| \right) +   \frac{\sigma_i^2}{1 + 2\tilde{\theta}\|\mathbf{h}\|\sigma_i^2}\\ & = \frac{\sigma_i^2}{1 + 2\tilde{\theta}\|\mathbf{h}\|\sigma_i^2}
\end{align*}
We would also like to slightly simplify \eqref{eq: theta}
\begin{align*}
& \|\mathbf{h}\|^2 = \sum_j 2 \sigma_j^2 \frac{\tilde{\theta}\|\mathbf{h}\| }{1 + 2\tilde{\theta}\|\mathbf{h}\|\sigma_j^2} = \\
&  \sum_j 1 - \frac{1}{1 + 2\tilde{\theta}\|\mathbf{h}\|\sigma_j^2} = m - \sum_{j=1}^m \frac{1}{1 + 2\tilde{\theta}\|\mathbf{h}\|\sigma_j^2}
\end{align*}
Denoting $\theta_{\bh} := 2\theta \|\mathbf{h}\|$ finally yields the desired expression $\frac{\sigma_i^2}{1 +\theta_{\bh} \sigma_i^2}$ for \eqref{eq: obj_before_HW}, where $\theta_{\bh}$ is defined through
\begin{align*}
\sum_{j=1}^m \frac{1}{1 + \theta_\bh \sigma_j^2} = m - \|\mathbf{h}\|^2 
\end{align*}

\end{proof}

\begin{lemma}\label{lm: phi_hw_exp}
 Define $\theta(t)$ via 
$$\sum_{j=1}^m \frac{1}{1 + \theta(t) \sigma_j^2} = m - t$$
Let $\phi^{HW}_i(\mathbf{h}) = \frac{\sigma_i^2}{1 +\theta_{\bh} \sigma_i^2}$ and $c_i = \frac{\sigma_i^2}{1 +\theta\sigma_i^2}, $ where $\theta_{\bh} = \theta(\|\bh\|)$ and $\theta = \theta(k)$
Then we have that
\begin{align*}
    \bbE \phi^{HW}_i(\mathbf{h}) \sim c_i
\end{align*}
\end{lemma}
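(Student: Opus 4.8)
The plan is to exploit that $\phi^{HW}_i(\mathbf{h})$ depends on the standard Gaussian vector $\mathbf{h}\in\bbR^k$ only through $\|\mathbf{h}\|^2$, which is a $\chi^2_k$ variable concentrating sharply about $k$. Setting $g_i(t):=\frac{\sigma_i^2}{1+\theta(t)\sigma_i^2}$, we have $\phi^{HW}_i(\mathbf{h})=g_i(\|\mathbf{h}\|^2)$ and $c_i=g_i(k)$, so the statement reduces to $\bbE\, g_i(\|\mathbf{h}\|^2)\sim g_i(k)$, which I would prove uniformly over $i\in[m]$ (this uniform form is also what the summation step in the proof of Theorem~\ref{thm: err} needs). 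The defining equation $\sum_j\frac{1}{1+\theta\sigma_j^2}=m-t$ has a strictly decreasing left-hand side in $\theta$ with range $(0,m)$, so by the implicit function theorem $t\mapsto\theta(t)$ is smooth and strictly increasing on $(0,m)$; hence $g_i$ is decreasing, and since $x\mapsto\frac{1+\theta(k)x}{1+\theta(t)x}$ is monotone with values between $\theta(k)/\theta(t)$ and $1$, one gets the index-free bound that $g_i(t)/c_i$ lies between $\min(1,\theta(k)/\theta(t))$ and $\max(1,\theta(k)/\theta(t))$.

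The next step is a stability estimate for the self-consistent equation. By the Laurent--Massart bound, $\bbP\bigl(|\|\mathbf{h}\|^2-k|>\delta k\bigr)\le 2e^{-c\delta^2 k}$ for an absolute $c>0$, so $\|\mathbf{h}\|^2\in[(1-\delta)k,(1+\delta)k]$ with overwhelming probability; combined with the sandwich above, it remains to show $\theta((1\pm\delta)k)/\theta(k)\to1$ as $\delta\downarrow0$. Writing $N(\theta):=\sum_j\frac{\theta\sigma_j^2}{1+\theta\sigma_j^2}$, so that $N(\theta(t))=t$, a direct computation gives $\frac{\theta N'(\theta)}{N(\theta)}=\sum_j\frac{\theta\sigma_j^2}{(1+\theta\sigma_j^2)^2}\big/\sum_j\frac{\theta\sigma_j^2}{1+\theta\sigma_j^2}\in(0,1]$, i.e.\ $\frac{d\log\theta}{d\log t}\ge1$, which already yields $\theta((1-\delta)k)/\theta(k)\to1$. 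For the upper side one needs $\frac{d\log\theta}{d\log t}$ bounded above on the window, which holds for the bilevel and power-law ensembles (there $\log\theta$ is, respectively, asymptotically affine and affine with slope a fixed multiple of $\alpha$ in $\log t$) and, more generally, under a mild non-degeneracy of the admissible sequence. This gives $\phi^{HW}_i(\mathbf{h})/c_i\to1$ in probability, uniformly in $i$.

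To pass from convergence in probability to convergence of the normalized expectations I would use the deterministic bound $0\le\phi^{HW}_i(\mathbf{h})\le\sigma_i^2$, valid because $\theta_\bh\ge0$ on the overwhelmingly likely event $\{\|\mathbf{h}\|^2<m\}$. Splitting $\bbE\,|\phi^{HW}_i(\mathbf{h})-c_i|$ over $\{|\|\mathbf{h}\|^2-k|\le\delta k\}$ and its complement, the first piece is at most $\varepsilon(\delta)\, c_i$ with $\varepsilon(\delta)\to0$ by the previous paragraphs, while the second is at most $\sigma_i^2\cdot 2e^{-c\delta^2 k}$; dividing by $c_i=\frac{\sigma_i^2}{1+\theta\sigma_i^2}$, the second contribution equals $(1+\theta\sigma_i^2)\cdot2e^{-c\delta^2 k}=o(1)$, since $1+\theta\sigma_1^2$ grows at most polynomially in the dimension. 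Letting the dimension grow and then $\delta\downarrow0$ gives $\bbE\,\phi^{HW}_i(\mathbf{h})\sim c_i$.

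I expect the crux to be the stability estimate of the second step. The inequality $\frac{d\log\theta}{d\log t}\ge1$ is immediate, but the matching upper bound — equivalently a uniform positive lower bound on the logarithmic derivative of $N$ — can degenerate for pathological admissible sequences (for instance a flat spectrum with $m-k$ of order $\sqrt{k}$, where $\theta(t)=t/(m-t)$ has unbounded log-slope near $t=m$). Closing the argument in that generality would require either imposing a regularity condition on the ensemble or replacing the crude sandwich by a direct linearization, using that $g_i$ is nearly affine in $\|\mathbf{h}\|^2$ over the $O(\sqrt{k})$ fluctuation window and that the resulting first-order correction to $\bbE\, g_i(\|\mathbf{h}\|^2)$ is $o(c_i)$.
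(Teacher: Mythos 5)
Your argument is the same one the paper invokes in a single sentence — ``standard Gaussian concentration and Lipschitzness of $\theta(t)$'' — but fleshed out in full: represent $\phi^{HW}_i(\mathbf{h})=g_i(\|\mathbf{h}\|^2)$, concentrate $\|\mathbf{h}\|^2$ about $k$ via Laurent--Massart, transfer the stability to $\theta$ through the self-consistent equation, and handle the tail with the a priori bound $\phi^{HW}_i\le\sigma_i^2$. This is the paper's intended route, and your write-up is the more honest version of it.

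The gap you flag at the end is real but benign, and your own suggested fix is exactly the right one. The ``crude sandwich'' indeed fails when $d\log\theta/d\log t$ is unbounded at $t=k$ (your flat-spectrum example with $m-k=\Theta(\sqrt{k})$), but in that very example the conclusion of the lemma still holds, because $\phi^{HW}_i$ is \emph{affine} in $\|\mathbf{h}\|^2$: there $\phi^{HW}_i(\mathbf{h})=\sigma^2(m-\|\mathbf{h}\|^2)/m$, so $\bbE\phi^{HW}_i(\mathbf{h})=\sigma^2(m-k)/m=c_i$ exactly. More generally, the fluctuation $\|\mathbf{h}\|^2-k$ has magnitude $O(\sqrt{k})$ and mean zero, so in a Taylor expansion of $g_i$ around $k$ the first-order term vanishes in expectation and the contribution is second order in the fluctuation; this is precisely the ``direct linearization'' you propose, and it removes the need for a two-sided Lipschitz bound on $\log\theta$ that the sandwich requires. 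Two small further caveats you implicitly rely on: (i) the tail estimate needs $1+\theta\sigma_1^2$ to grow at most sub-exponentially, which amounts to a condition-number bound $1+\theta\sigma_1^2\lesssim 1+\tfrac{k}{m-k}\kappa(\bA)^2$ that holds for the ensembles considered but is not guaranteed by the bare ``RSVD admissible sequence'' definition; and (ii) to sum the conclusion over $i$ in the proof of Theorem~\ref{thm: err} one does need the uniformity in $i$ that you announce, which your sandwich (and its linearized replacement) does deliver because the ratio $\phi^{HW}_i/c_i=\tfrac{1+\theta\sigma_i^2}{1+\theta_\bh\sigma_i^2}$ is controlled by $\theta_\bh/\theta$ independently of $i$.
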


\begin{proof}
    The proof follows from standard Gaussian concentration and Lipschitness of $\theta(t)$.
\end{proof}

\subsection{Propositions}

We prove only Proposition \ref{prop: upp_bnd} here due to the lack of space, but the proof of Proposition \ref{prop: upp_bnd_filt} is almost identical.  

\begin{proof}[Proof of Proposition \ref{prop: upp_bnd}]
The lower bound is a well-lnown inequality for any rank-$k$ approximation of $\bA$. 
To prove the upper bound,  suffices to show that 
\begin{align*}
    \theta^{C}_*:= \frac{k}{C\sum_{i = r + 1}^m \sigma_i^2} \le \theta
\end{align*}
Since $f(x) = \sum \frac{1}{1 + x \sigma_i^2}$ decreases in $x$, it is enough to demonstrate the following: 
\begin{align}\label{eq: f_ineq}
    f(\theta^C_*)  \ge m - k 
\end{align}
To do so, it suffices to prove the following for any  $S>0$: 
    $$\inf_{ \sigma_{r+1}^2 + \dots + \sigma_d^2 = S} f(\theta^C_*) \ge f(\theta) = m - k  $$
Note that $f(\theta^C_*)$ is decreasing in $\sigma_1,\dots, \sigma_r$
Since $\sigma_1,\dots,\sigma_r$ are unconstrained, the infimum is attained when $\sigma_1,\dots,\sigma_r \to \infty$ and we are left with $$\inf_{ \sigma_{r+1}^2 + \dots + \sigma_d^2 = S}  \sum_{i = r + 1}^d \frac{1}{1 + \frac{k\sigma_i^2}{CS}}$$ 
Now, using Jensen's inequality, we have:
\begin{align*}
    & f(\theta_*) \ge \frac{d-r}{1 + \frac{k}{C(d-r)}} = \frac{d-r}{1 + \frac{k}{\frac{(d-k)k}{(d-r)(k-r)}(d-r)}} \\
    & = \frac{d-r}{1 + \frac{k}{\frac{(d-k)k}{(d-r)(k-r)}(d-r)}} = \frac{d-r}{1 + \frac{k-r}{d-k}} = d - k
\end{align*}
\end{proof}

\subsection{Proof of Corollary \ref{cor: ensemble}}
\textbf{Bilevel Ensembles.} Plugging-in the assumption into the result of Theorem \ref{thm: err} for $\tilde{\Theta}$ yields
\begin{align*}
    \frac{ra^2}{\tilde{\Theta}+ k a^2} + \frac{(m-r) b^2}{\tilde{\Theta} + k b^2} = 1
\end{align*}
Performing some algebraic manipulations imply that the error is:
\begin{align*}
     & \tilde{\Theta} =
    \frac{1}{2} \Bigl((r-k) a^2 + (m - r- k) b^2 \\& + \sqrt{((r-k) a^2 + (m - r- k) b^2)^2 + 4 a^2 b ^2 (m-k)} \Bigr)
\end{align*}
As for an arbitrary filter $\chi(\cdot)$,
\begin{align} \label{eq: arb_filt_theta}
    \frac{r}{1 + \theta_0 \chi(a)^2} + \frac{m-r}{1 + \theta_0 \chi(b)^2} = m - k
\end{align}
Solving for $\theta_0$ yields
\begin{align*}
    \theta_0 &= \frac{1}{2(m-k) \chi(a)^2 \chi(b)^2 } \\ &\cdot \biggl( (k-r) \chi(a)^2 + (k + r - m) \chi(b)^2 \\ &+ \Bigl(\Bigl((k-r) \chi(a)^2 + (k + r - m) \chi(b)^2 \Bigr)^2 \\ &+ 4 k (m-k) \chi(a)^2 \chi(b)^2\Bigr)^{1/2}
    \biggr) 
\end{align*}
By plugging $\theta_0$ into the result of Theorem \ref{thm: err_filt}, we obtain the expression for the error of arbitrary polynomial filter. \\
\textbf{Structure of the optimal filter.} Using \eqref{eq: arb_filt_theta}, we observe that
\begin{align*}
    (m-k)\cdot (1 + \theta_0 \chi(a)^2)\cdot (1 + \theta_0 \chi(b)^2) \\ = r (1 + \theta_0 \chi(b)^2) + (m-r) (1 + \theta_0 \chi(a)^2)
\end{align*}
Furthermore, for the error
\begin{align*}
    &\tilde{\Theta} = \frac{r a^2}{1 + \theta_0 \chi(a)^2} + \frac{(m-r) b^2}{1 + \theta_0 \chi(b)^2} \\
    &= (m-k) \cdot \Bigl(\frac{r (1 + \theta_0 \chi(b)^2)}{r (1 + \theta_0 \chi(b)^2) + (m-r) (1 + \theta_0 \chi(a)^2)}\cdot a^2 \\ &+ \frac{(m-r) (1 + \theta_0 \chi(a)^2)}{r (1 + \theta_0 \chi(b)^2) + (m-r) (1 + \theta_0 \chi(a)^2)} \cdot b^2\Bigr)
\end{align*}
It can be seen that the error is a convex combination of $a^2$ and $b^2$ and the best strategy is to maximize the ratio $\frac{1 + \theta_0 \chi(a)^2}{1 + \theta_0 \chi(b)^2}$ assuming $a > b$. Note that by \eqref{eq: arb_filt_theta}, it suffices to make $1 + \theta_0 \chi(a)^2$ large which follows when $1 + \theta_0 \chi(b)^2 \approx \frac{m-r}{m-k}$, which is possible when $k > r$; this implies for $\theta$
\begin{align*}
    \theta_0 \approx \frac{1}{\chi(b)^2} \cdot \frac{k-r}{m-k}
\end{align*}
Which implies $1 + \theta_0 \chi(a)^2 \approx 1 + \frac{k-r}{m-k} \cdot \frac{\chi(a)^2}{\chi(b)^2} $ and it suffices to take $\frac{\chi(a)^2}{\chi(b)^2}$ to be large. Hence the higher the degree of the polynomial filter, the smaller error would be. The best achievable error is $(m-k) b^2$ which matches the fundamental lower bound in Proposition \ref{prop: upp_bnd}.

\textbf{Power Law Ensembles. } We use the result in \cite{simon2023more} and \cite{ildiz2024high} which follows by an integral approximation of the series in the regime $m \rightarrow \infty$.
\begin{align*}
    1 = \sum_{i=1}^m \frac{i^{-2\alpha}}{\tilde{\Theta} + k i^{-2\alpha}} = \frac{1}{k} \sum_{i=1}^m \frac{i^{-2\alpha}}{\frac{\tilde{\Theta}}{k} + i^{-2\alpha}} 
\end{align*}
This implies for the error from \cite{ildiz2024high}
\begin{align*}
     \tilde{\Theta} \sim \frac{1}{\Bigl(k \sinc(\frac{\pi}{2\alpha} )\Bigr)^{2\alpha}} \Bigl(1 + O(\frac{1}{k})\Bigr)
\end{align*}
Now we consider an arbitrary polynomial filter $\chi(t) := t^p$. First, to evaluate $\theta$, we consider the following equation:
\begin{align*}
    k = \sum_{i=1}^m \frac{\theta_0 i^{-2\alpha p}}{1 + \theta_0 i^{-2\alpha p}} 
\end{align*}
Which implies from \cite{ildiz2024high}
\begin{align*}
    \theta_0 = \Bigl(k \sinc(\frac{\pi}{2\alpha p} )\Bigr)^{2\alpha p} \Bigl(1 + O(\frac{1}{k})\Bigr)
\end{align*}
As for the error from Theorem \ref{thm: err_filt} we have
\begin{align*}
    \tilde{\Theta} = \sum_{i=1}^\infty \frac{i^{-2\alpha}}{1 + \theta_0 i^{-2\alpha p}}  \approx  \sum_{i=1}^\infty \frac{i^{-2\alpha}}{1 +  \Bigl(\frac{k}{i} \sinc(\frac{\pi}{2\alpha p} )\Bigr)^{2\alpha p} }
\end{align*}
Now when $p \rightarrow \infty$, we observe that
\begin{align*}
    \sinc^{2\alpha p}(\frac{\pi}{2\alpha p} ) \approx 1 + o(1)
\end{align*}
Thus for each in entry in the sum above we have
\begin{align*}
    (\frac{k}{i})^{2\alpha p} \rightarrow \begin{cases}
        \infty & i < k\\
        1 & i = k \\
        0 & i > k
    \end{cases}
\end{align*}
Therefore, for the error
\begin{align*}
    & \sum_{i=1}^{\infty} \frac{i^{-2\alpha}}{1 + \theta_0 i^{-2\alpha p}} \approx \frac{k^{-2\alpha}}{1 +  \Bigl(\sinc(\frac{\pi}{2\alpha p} )\Bigr)^{2\alpha p} } \\ & + \sum_{i=1}^{k-1} \frac{i^{-2\alpha}}{1 +  \Bigl(\frac{k}{i} \sinc(\frac{\pi}{2\alpha p} )\Bigr)^{2\alpha p} }  + \sum_{i=k+1}^{\infty} \frac{i^{-2\alpha}}{1 +  \Bigl(\frac{k}{i} \sinc(\frac{\pi}{2\alpha p} )\Bigr)^{2\alpha p} } \\ 
   &  \approx \frac{k^{-2\alpha}}{2} + o(1) + \sum_{i=k+1}^{\infty} i^{-2\alpha} \approx \sum_{i=k+1}^{\infty} i^{-2\alpha}
\end{align*}
Which concludes the proof.

\bibliographystyle{plainnat}
\bibliography{main}

\end{document}